\newtheorem{theorem}{Theorem}
\newtheorem{definition}{Definition}
\newtheorem{remark}{Remark}
\newtheorem{corollary}{Corollary}
\newtheorem{proposition}{Proposition}
\newtheorem{example}{Example}
\newcommand{\CC}{\mathbb{C}}
\newcommand{\MM}{\textnormal{M}}
\title{Structure-preserving diagonalization of matrices in \\ indefinite inner product spaces}
\author{Philip Saltenberger\\
  \small Institut Computational Mathematics, AG Numerik, TU Braunschweig\\
  \small Universit{\"a}tsplatz 2, 38106 Braunschweig\\
  \small Germany
}
\begin{document} 
\maketitle

\abstract{In this work some results on the structure-preserving diagonalization of selfadjoint and skewadjoint matrices in indefinite inner product spaces are presented. In particular, necessary and sufficient conditions on the symplectic diagonalizability of (skew)-Hamiltonian matrices and the perplectic diagonalizability of per(skew)-Hermitian matrices are provided. Assuming the structured matrix at hand is additionally normal, it is shown that any symplectic or perplectic diagonalization can always be constructed to be unitary. As a consequence of this fact, the existence of a unitary, structure-preserving diagonalization is equivalent to the existence of a specially structured additive decomposition of such matrices. The implications of this decomposition are illustrated by several examples.}

\section{Introduction} \label{intro-sec} \normalsize
Structured matrices are omnipresent in many areas of mathematics. For instance,
in the theory of matrix equations \cite{LaRod95}, structures arising from the consideration of selfadjoint and skewadjoint matrices with respect to certain inner products play a crucial role . Often, these inner products are indefinite, so that the underlying bilinear or sesquilinear form does not define a scalar product. Hence, results from Hilbert-space-theory are not available in this case and an independent mathematical analysis is required. In this work, some results in this direction are presented.

Considering the (definite) standard Euclidean inner product $\langle x,y \rangle = x^Hy, x,y \in \mathbb{C}^m$, on $\mathbb{C}^m \times \mathbb{C}^m$, it is well-known that selfadjoint and skewadjoint matrices (i.e. Hermitian and skew-Hermitian matrices) have very special properties. For example, (skew)-Hermitian matrices are always diagonalizable by a unitary matrix. The unitary matrices constitute the automorphism group of the scalar product $\langle \cdot, \cdot \rangle$ which means that $\langle Gx, Gy \rangle = \langle x,y \rangle$ always holds for any unitary matrix $G$ and all vectors $x,y \in \mathbb{C}^m$. The automorphism group is sometimes called the \textit{Lie-group} with respect to $\langle \cdot, \cdot \rangle$ whereas the selfadjoint and skewadjoint matrices are referred to as the Jordan and Lie algebras \cite{Mack07}. The Euclidean scalar product is a special case of a sesquilinear form $[x,y] = x^HBy$ on $\mathbb{C}^m \times \mathbb{C}^m$ with $B = I_m$ being the $m \times m$ identity matrix. Often, sesquilinear forms $[x,y]=x^HBy$ appear in mathematics where $B \neq I_m$. In particular, cases that have been intensively studied are those where $B$ is some (positive/negative definite or indefinite) Hermitian matrix \cite{GoLaRod05} or a skew-Hermitian matrix \cite{BuGeFass15}. The Lie-group, the Lie algebra and the Jordan algebra are defined analogously to the Euclidean scalar product for such forms as the group of automorphisms, selfadjoint and skewadjoint matrices with respect to $[x,y]=x^HBy$.

In this work, selfadjoint and skewadjoint matrices with respect to indefinite Hermitian or skew-Hermitian sesquilinear forms are considered from the viewpoint of diagonalizability. In particular, since Hermitian and skew-Hermitian matrices are always diagonalizable by a unitary matrix (i.e. an automorphism with respect to $\langle \cdot, \cdot \rangle$), we will consider the question under what conditions a similar statement holds for the automorphic diagonalization of selfadjoint and skewadjoint matrices with respect to other (indefinite) sesquilinear forms. For two particular sesquilinear forms (the symplectic and the perplectic sesquilinear form) this question will be fully analyzed and answered in Sections \ref{sec:sesf} and \ref{sec:diag}. For the symplectic bilinear form, this question was already addressed in \cite{CruzFass16}. In Section \ref{sec:normal} we consider these results in the context of normal matrices for which there always exists a unitary diagonalization. In particular, the results presented in this section apply to selfadjoint and skewadjoint matrices for which a unitary diagonalization exists. We will show that this subclass of matrices has very nice properties with respect to unitary and automorphic diagonalization and how both types of diagonalizations interact. In Section \ref{sec:notation} the notation used throughout this work is introduced whereas in Section \ref{sec:conclusions} some concluding remarks are given.

\section{Notation}
\label{sec:notation}

For any $m \in \mathbb{N}$ and $\mathbb{K} = \mathbb{R}, \CC$ we denote by $\mathbb{K}^m$ the $m$-dimensional vector space over $\mathbb{K}$ and by  $\MM_{m \times m}(\mathbb{K})$ the vector space of all $m \times m$ matrices over $\mathbb{K}$. The vector subspace $\mathcal{X}$ of $\mathbb{K}^m$ which is obtained from all possible linear combinations of some vectors $x_1, \ldots , x_k \in \mathbb{K}^m$ is called the span of $x_1, \ldots , x_k$ and is denoted by $\textnormal{span}(x_1, \ldots , x_k)$. A basis of some subspace $\mathcal{X} \subseteq \mathbb{K}^m$ is a linearly independent set of vectors $x_1, \ldots , x_k \in \mathcal{X}$ such that $\mathcal{X} = \textnormal{span}(x_1, \ldots , x_k)$. In this case we say that the dimension of $\mathcal{X}$ equals $k$, that is, $\dim(\mathcal{X}) = k$.  The symbol $\mathbb{K}^m \times \mathbb{K}^m$ is used to denote the direct product of $\mathbb{K}^m$ with itself, i.e. $\mathbb{K}^m \times \mathbb{K}^m = \lbrace (x,y) \; | \; x,y \in \mathbb{K}^m \rbrace$. For any matrix $A \in \MM_{m \times m}(\mathbb{K})$, the notions $\textnormal{im}(A)$ and $\textnormal{null}(A)$ refer to the image and the nullspace (kernel) of $A$, i.e. $\textnormal{im}(A) = \lbrace Ax \, | \, x \in \mathbb{K}^m \rbrace$ and $\textnormal{null}(A) = \lbrace x \in \mathbb{K}^m \, | \, Ax = 0 \rbrace$. The rank of $A \in \MM_{m \times m}(\mathbb{K})$ is defined as the dimension of its image. For any matrix $A \in \MM_{m \times m}(\mathbb{K})$ the superscripts $T$ and $H$ denote the transpose $A^T$ of $A$ and the Hermitian transpose $A^H = \overline{A}^T$. The overbar denotes the conjugation of a complex number and applies entrywise to matrices.  The $m \times m$ identity matrix is throughout denoted by $I_m$ whereas the $m \times m$ zero matrix, the zero vector in $\mathbb{K}^m$ or the number zero are simply denoted by $0$ (to specify dimensions $0_{m \times m}$ is used in some places to refer to the $m \times m$ zero matrix). A Hermitian matrix $A \in \MM_{m \times m}(\mathbb{K})$ satisfies $A^H=A$ and a skew-Hermitian matrix $A^H = -A$. Moreover, a matrix $A \in \MM_{m \times m}(\mathbb{K})$ is called unitary if $A^HA = AA^H= I_m$ holds and normal in case $A^HA = AA^H$ holds. For two matrices $A,B \in \MM_{m \times m}(\mathbb{K})$ the notation $A \oplus B$ is used to denote their direct sum, i.e. the matrix $C \in \MM_{2m \times 2m}(\mathbb{K})$ given by
$$ C = \begin{bmatrix} A & 0_{m \times m} \\ 0_{m \times m} & B \end{bmatrix}.$$
For a given matrix $A \in \MM_{m \times m}(\mathbb{K})$ any scalar $\lambda \in \CC$ which satisfies $Ax = \lambda x$ for some nonzero vector $x \in \CC^m$ is called an eigenvalue of $A$. The set of all eigenvalues of $A$ is denoted by $\sigma(A)$ and equals the zero set of the degree-$m$ polynomial $\det(A - zI_m)$. The algebraic multiplicity of $\lambda$ as an eigenvalue of $A$ equals the multiplicity of $\lambda$ as a zero of $\det(A-zI_m)$.  Whenever $\lambda \in \CC$ is some eigenvalue of $A$ any vector $x \in \CC^m$ satisfying $Ax=\lambda x$ is called an eigenvector of $A$ (for $\lambda$). The set of all eigenvectors of $A$ for $\lambda \in \sigma(A)$ is a vector subspace of $\CC^m$ and is called the corresponding eigenspace (of $A$ for $\lambda$). Its dimension is referred to as the geometric multiplicity of $\lambda$. The matrix $A$ is called diagonalizable if there exist $m$ linearly independent eigenvectors of $A$. These vectors consequently form a basis of $\CC^m$. A matrix $A \in \MM_{m \times m}(\mathbb{K})$ is diagonalizable if and only if the geometric and algebraic multiplicities of all eigenvalues of $A$ coincide.

\section{Sesquilinear Forms}
\label{sec:sesf}

In this section we introduce the notion of a sesquilinear form on $\CC^m \times \CC^m$ and some related basic concepts. Notice that Definition \ref{def:bilinearform} slightly deviates from the definition of a sesquilinear form given in \cite[Sec.\,5.1]{Kub11}.

\begin{definition}\label{def:bilinearform}
A  sesquilinear form $[ \cdot , \cdot]$ on $\CC^{m} \times \CC^m$ is a mapping $[\cdot , \cdot]: \CC^m \times \CC^m \rightarrow \CC$ so that for all $u,v,w \in \CC^m$ and all $\alpha , \beta \in \CC$ the following relations $(i)$ and $(ii)$ hold
\begin{align*} (i) \; [\alpha u + \beta v , w] = \overline{\alpha} [u,w] + \overline{\beta} [v,w] \qquad (ii) \; [u, \alpha v + \beta w ] &= \alpha [u,v] + \beta [u,w]. \end{align*}
\end{definition}

If $[ \cdot, \cdot]$ is some sesquilinear form and $x := \alpha e_j, y := \beta e_k \in \CC^m$ with $\alpha , \beta \in \CC$ are two vectors that are multiples of the $j$th and $k$th unit vectors $e_j$ and $e_k$, then $[x,y] = \overline{\alpha} \beta [e_j,e_k]$. Thus any sesquilinear form is uniquely determined by the images of the standard unit vectors $[e_j,e_k], j,k=1, \ldots , m$. In particular, $[ \cdot , \cdot]$ on $\CC^m \times \CC^m$ can be expressed as \begin{equation} [x,y]=x^HBy \label{bilinearform} \end{equation} for the particular matrix $B = [b_{jk}]_{jk} \in \MM_{m \times m}(\CC)$ with $b_{jk} = [e_j,e_k]$, $j,k=1, \ldots , m$.
A form $[ \cdot , \cdot]$  as in (\ref{bilinearform}) is called Hermitian if $[x,y]=\overline{[y,x]}$ holds for all $x,y \in \CC^m$. It is easy to see that $[ \cdot , \cdot]$ is Hermitian \index{matrix!Hermitian} \index{sesquilinear form!Hermitian}  if and only if $B \in \MM_{m \times m}(\CC)$ is Hermitian, i.e. $B=B^H$ \cite[Sec.\,2.1]{GoLaRod05}. The form $[ \cdot, \cdot]$ is called skew-Hermitian if $[x,y]=- \overline{[y,x]}$ holds for all $x,y \in \CC^m$. This is the case if and only if $B=-B^H$.

The following Definition \ref{def:gramian} introduces two classes of subspaces $\mathcal{S} \subseteq \CC^m$ related in a particular fashion to a sesquilinear form $[ \cdot, \cdot]$ (see, e.g., \cite[Sec.\,2.3]{GoLaRod05}).

\begin{definition} \label{def:gramian} Let $[x,y]=x^HBy$ be some sesquilinear form on $\CC^{m} \times \CC^{m}$.
\begin{enumerate}
\item A subspace $\mathcal{S} \subseteq \CC^{m}$ of dimension $\dim (\mathcal{S}) = k \geq 1$ is called neutral (with respect to $[\cdot , \cdot]$) if $\textnormal{rank}(V^HBV) = 0$ for any basis $v_1 , \ldots , v_k$ of $\mathcal{S}$ and $V = [ \, v _1 \; \cdots \; v_k \, ].$
\item A subspace $\mathcal{S} \subseteq \CC^{m}$ of dimension $\dim (\mathcal{S}) = k \geq 1$ is called nondegenerate (with respect to $[\cdot , \cdot]$) if $V^HBV$ is nonsingular, i.e. $\textnormal{rank}(V^HBV)=k$, for any basis $v_1 , \ldots , v_k$ of $\mathcal{S}$ and $V = [ \, v _1 \; \cdots \; v_k \, ].$ Otherwise, $\mathcal{S}$ is called degenerate.
\end{enumerate}
\end{definition}

In case $m=2n$ is even, any neutral subspace $\mathcal{S} \subseteq \CC^{m}$ with $\dim(\mathcal{S}) = n$ is called \textit{Lagrangian} (subspace) (see, e.g., \cite[Def.\,1.2]{Freiling02}). Some analysis on this kind of subspaces is presented in Section \ref{sec:Lagrangian}. A sesquilinear form as in \eqref{bilinearform} is called \textit{nondegenerate}, if $\mathcal{S} = \CC^{m}$ is nondegenerate with respect to $[\cdot, \cdot]$. In the sequel, nondegenerate sesquilinear forms are called \textit{indefinite inner products}.  Note that the sesquilinear form in \eqref{bilinearform} is nondegenerate, i.e. an indefinite inner product, if and only if $B \in \MM_{m \times m}(\CC)$ is nonsingular \cite[Sec.\,2.1]{Mack05}.

\begin{proposition} \label{prop:adjoint}
For any indefinite inner product $[x,y]=x^HBy$ on $\CC^{m} \times \CC^{m}$ and any $A \in \MM_{m \times m}(\CC)$ there exists a unique matrix $A^\star \in \MM_{m \times m}(\CC)$  such that
$$ [Ax,y]= [x, A^\star y] \quad \textnormal{holds for all } x,y \in \CC^{m}.$$
\end{proposition}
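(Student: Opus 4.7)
The plan is to exploit the matrix representation of $[\cdot,\cdot]$ from \eqref{bilinearform} together with the fact that $B$ is nonsingular (which is guaranteed since $[\cdot,\cdot]$ is nondegenerate, as recalled just before the proposition). Writing the two sides of the required identity in terms of $B$ gives
\[
[Ax,y] = (Ax)^H B y = x^H (A^H B) y \qquad\text{and}\qquad [x,A^\star y] = x^H B A^\star y,
\]
so the identity $[Ax,y]=[x,A^\star y]$ for all $x,y$ is equivalent to $x^H(A^H B - B A^\star) y = 0$ for all $x,y\in\CC^m$.

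First I would establish the following elementary bilinear-algebra fact, which handles both uniqueness and the reduction above in one stroke: if $M\in\MM_{m\times m}(\CC)$ satisfies $x^H M y = 0$ for all $x,y\in\CC^m$, then $M=0$. This is immediate from choosing $x=e_j$, $y=e_k$ to recover each entry $m_{jk}$. Consequently the equation $A^H B = B A^\star$ characterizes $A^\star$ completely.

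For existence, since $B$ is nonsingular I would simply define
\[
A^\star := B^{-1} A^H B,
\]
and verify by direct substitution that $x^H B A^\star y = x^H B B^{-1} A^H B y = x^H A^H B y = [Ax,y]$, so this candidate satisfies the defining property. For uniqueness, if $A_1^\star$ and $A_2^\star$ both satisfy the identity, then $x^H B(A_1^\star - A_2^\star) y = 0$ for all $x,y$, so by the lemma $B(A_1^\star - A_2^\star) = 0$, and multiplying by $B^{-1}$ on the left yields $A_1^\star = A_2^\star$.

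There is no real obstacle here; the only thing to be careful about is using nondegeneracy of the form, since the argument hinges entirely on being able to invert $B$. Everything else is a one-line computation, and the resulting explicit formula $A^\star = B^{-1}A^H B$ is likely to be useful in subsequent sections, so I would state it explicitly in the proof for later reference.
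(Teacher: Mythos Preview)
Your proof is correct. The paper does not actually prove this proposition; it merely states it with a reference to \cite[Sec.\,2.2]{Mack05} and records the explicit formula $A^\star = B^{-1}A^HB$ immediately afterwards, which is exactly the candidate you construct and verify.
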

The matrix $A^\star \in \MM_{m \times m}(\CC)$ corresponding to $A \in \MM_{m \times m}(\CC)$ in Proposition \ref{prop:adjoint} is called the \textit{adjoint} of $A$. It can be expressed as $A^\star = B^{-1}A^HB$ and also satisfies $[x,Ay] = [A^\star x,y]$ for all $x,y \in \CC^{m}$ \cite[Sec.\,2.2]{Mack05}. A matrix $A$ that commutes with its adjoint $A^\star$, i.e. $AA^\star = A^\star A$, is called \textit{normal with respect to} $[x,y]=x^HBy$ or simply $B$\textit{-normal}. For any indefinite inner product $[x,y]=x^HBy$ on $\CC^{m} \times \CC^{m}$ there are three classes of $B$-normal matrices that deserve special attention (see also \cite[Sec.\,2.2]{Mack05}).

\begin{definition}\label{def:classes}
Let $[x,y]=x^HBy$ be some indefinite inner product on $\CC^{m} \times \CC^{m}$.
\begin{enumerate}
\item A matrix $G \in \MM_{m \times m}(\CC)$ with the property $G^{-1} = G^\star$ is called an automorphism (with respect to $[ \cdot, \cdot]$).
\item A matrix $J \in \MM_{m \times m}(\CC)$ satisfying $J^\star =B^{-1}J^HB = J$ is called selfadjoint (with respect to $[\cdot, \cdot]$) whereas a matrix $L \in \MM_{m \times m}(\CC)$  satisfying $L^\star = B^{-1}L^HB = - L$ is called skewadjoint.
\end{enumerate}
\end{definition}

Notice that, if $G \in \MM_{m \times m}(\CC)$ is an automorphism, $[Gx,Gy]=[x,y]$ holds for all $x,y \in \CC^{m}$ since $[Gx,Gy] = [x,G^\star G y]$ and $G^\star G = G^{-1} G = I_m$.
%implies $[x,y]=[G^\star Gx,y]$ for all $x,y \in \CC^{m}$. This in turn can only hold if $G^\star G = I_{m}$, i.e. $G^{-1} = G^\star$.
In particular, any automorphism is nonsingular. For the standard Euclidean scalar product $(x,y)=[x,y]=x^HI_{m}y$, automorphisms, selfadjoint and skewadjoint matrices are those which are unitary, Hermitian or skew-Hermitian, respectively. Beside these, special names have also been given to matrices which are automorph, selfadjoint or skewadjoint with respect to the indefinite inner products $[x,y]=x^HBy$ on $\CC^{2n} \times \CC^{2n}$ induced by the matrices $B = J_{2n} \in \MM_{2n \times 2n}(\mathbb{R})$ and $B = R_{2n} \in \MM_{2n \times 2n}(\mathbb{R})$ given by
$$ J_{2n} = \begin{bmatrix} & I_n \\ -I_n & \end{bmatrix}, \quad R_{2n} = \begin{bmatrix} & R_n \\ R_n & \end{bmatrix} \; \textnormal{with} \;  R_n = \begin{bmatrix} & & 1 \\ & \iddots & \\ 1 & & \end{bmatrix}.$$
 These names are listed in the table from Figure \ref{fig:1}\footnote{Notice that these names are not consistently used in the literature. For instance, a Hamiltonian matrix here \\ and in \cite{Freiling02} is called $J$-Hermitian in \cite{Mack05}.}. For instance, a skew-Hamiltonian matrix $A \in \MM_{2n \times 2n}(\CC)$ and a per-Hermitian matrix $C \in \MM_{2n \times 2n}(\CC)$ have expressions of the form
\begin{equation} A = \begin{bmatrix} A_1 & A_2 \\ A_3 & A_1^H \end{bmatrix}  \quad \textnormal{and} \quad C = \begin{bmatrix} C_1 & C_2 \\ C_3 & R_nC_1^HR_n \end{bmatrix}, \quad A_j, C_j \in \MM_{n \times n}(\CC), \label{equ:ex_matrices} \end{equation}
where it holds that $A_2=-A_2^H, A_3=-A_3^H$ and that $C_2, C_3 \in \MM_{n \times n}(\CC)$ are themselves per-Hermitian with respect to $[x,y]=x^HR_ny$ on $\CC^n \times \CC^n$.  Notice that for any indefinite inner product  $[x,y]=x^HBy$ on $\CC^{m} \times \CC^{m}$ the selfadjoint and skewadjoint structures are preserved under similarity transformations with automorphisms. This fact is well known and easily confirmed for unitary similarity transformations of Hermitian and skew-Hermitian matrices. In our setting this means that, whenever $A \in \MM_{2n \times 2n}(\CC)$ is (skew)-Hamiltonian (per(skew)-Hermitian)  and $G \in \MM_{2n \times 2n}(\CC)$ is symplectic (perplectic), then $G^{-1}AG$ is again (skew)-Hamiltonian (per(skew)-Hermitian). We will only be considering the indefinite inner products induced by $J_{2n}$ and $R_{2n}$ on $\CC^{2n} \times \CC^{2n}$ from now on.

\begin{table}
\caption{Structures with respect to $[x,y]=x^HJ_{2n}y$ and $[x,y]=x^HR_{2n}y$ on $\CC^{2n} \times \CC^{2n}$.}
\begin{center} \small
\begin{tabular}{|l||l|l||l|l|} \hline
Structure &   $[x,y]=x^HJ_{2n}y$ & &   $[x,y]=x^HR_{2n}y$ &\\ \hline \hline
selfadjoint & skew-Hamiltonian & $J_{2n}^TA^HJ_{2n} = A$ & per-Hermitian & $R_{2n}A^HR_{2n} = A$ \\
skewadjoint & Hamiltonian & $J_{2n}^TA^HJ_{2n} = -A$ & perskew-Hermitian & $R_{2n}A^HR_{2n} = -A$ \\ \hline
automorph & symplectic & $J_{2n}^TA^HJ_{2n} = A^{-1}$ & perplectic & $R_{2n}A^HR_{2n} = A^{-1}$ \\ \hline
\end{tabular}
\end{center}
\label{fig:1}
\end{table}
\normalsize
The result from Proposition \ref{prop:sylvester} below is central for the upcoming discussion and can be found in, e.g., \cite[Sec.\,4.5]{HoJo90} (for the case $A=A^H$). The statement for $A=-A^H$ is easily verified by noting that $A = A^H$ is Hermitian if and only if $iA$ is skew-Hermitian.

\begin{proposition}[Sylvesters Law of Inertia] \label{prop:sylvester}  Let $A \in \MM_{m \times m}(\CC)$ and assume that either $A=A^H$ or $A=-A^H$ holds. Then there exists a nonsingular matrix $U \in \MM_{m \times m}(\CC)$ so that
$$ U^H A U = \left[ \begin{array}{c|c|c} - \alpha I_p & & \\ \hline & \alpha I_q & \\ \hline & & 0_{r \times r} \end{array} \right]$$
where $\alpha = 1$ if $A$ is Hermitian and $\alpha = i$ otherwise. Hereby, $p$ coincides with the number of negative real/purely imaginary eigenvalues of $A$, $q$ coincides with the number of positive real/purely imaginary eigenvalues of $A$ and $r$ is the algebraic multiplicity of zero as an eigenvalue of $A$.
\end{proposition}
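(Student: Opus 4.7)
The plan is to prove the Hermitian case directly from the spectral theorem, and then reduce the skew-Hermitian case to it using the observation flagged right before the proposition statement.

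For the case $A = A^H$, I would start from the spectral theorem for Hermitian matrices: there exists a unitary $Q \in \MM_{m \times m}(\CC)$ such that $Q^H A Q = \Lambda$, where $\Lambda = \mathrm{diag}(\lambda_1, \ldots , \lambda_m)$ is real. By applying a permutation matrix $P$ (which is unitary), I can arrange the diagonal entries of $P^H \Lambda P$ so that the first $p$ entries are the negative eigenvalues (counted with multiplicity), the next $q$ are the positive eigenvalues, and the last $r = m - p - q$ are the zero eigenvalues; note $r$ is precisely the geometric, hence algebraic, multiplicity of $0$ since $A$ is diagonalizable. Finally I would normalize by the diagonal matrix $D$ whose $j$th diagonal entry is $|\lambda_j|^{-1/2}$ when $\lambda_j \neq 0$ and $1$ otherwise; then $D = D^H$ is nonsingular, and setting $U = QPD$ yields a nonsingular matrix with $U^H A U = \mathrm{diag}(-I_p, I_q, 0_{r\times r})$, which is the required form with $\alpha = 1$.

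For the case $A = -A^H$, I would set $\widetilde A := iA$. Then $\widetilde A^H = -i A^H = iA = \widetilde A$, so $\widetilde A$ is Hermitian. Since $\sigma(\widetilde A) = i\,\sigma(A)$, the purely imaginary eigenvalue $i\mu$ of $A$ with $\mu > 0$ corresponds to the negative real eigenvalue $-\mu$ of $\widetilde A$, and vice versa; the zero eigenvalue is preserved with the same algebraic multiplicity $r$. Writing $p$ for the count of $A$'s negative purely imaginary eigenvalues and $q$ for the positive ones, the already-proved Hermitian case, applied to $\widetilde A$ with the roles of positive and negative eigenvalues swapped, produces a nonsingular $U$ with
\[ U^H \widetilde A U = \left[\begin{array}{c|c|c} I_p & & \\ \hline & -I_q & \\ \hline & & 0_{r \times r} \end{array}\right]. \]
Dividing by $i$ gives $U^H A U = \mathrm{diag}(-iI_p,\, iI_q,\, 0_{r \times r})$, matching the stated block form with $\alpha = i$.

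I do not anticipate any serious obstacle: both steps rest on the spectral theorem and a normalization. The only points that require minor care are keeping track of signs when passing between $A$ and $iA$ (since multiplication by $i$ reverses the ordering of positive versus negative imaginary parts), and noting that the scaling matrix $D$ must be applied on both sides as $D^H \cdot D$, which is why the normalization uses $|\lambda_j|^{-1/2}$ rather than $|\lambda_j|^{-1}$.
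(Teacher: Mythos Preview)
Your proposal is correct and follows essentially the same route the paper indicates: the paper does not actually prove this proposition but cites \cite[Sec.\,4.5]{HoJo90} for the Hermitian case and remarks that the skew-Hermitian case follows from the observation that $A=A^H$ is Hermitian if and only if $iA$ is skew-Hermitian. Your spectral-theorem-plus-normalization argument is the standard proof behind that citation, and your reduction via $\widetilde A = iA$ is exactly the trick the paper points to; your bookkeeping of how the sign swap interchanges the $p$ and $q$ counts is handled correctly.
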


The triple $(p,q,r)$ from Proposition \ref{prop:sylvester} is usually referred to as the inertia of $A$ \cite[Sec.\,4.5]{HoJo90}. Two Hermitian or skew-Hermitian matrices $A,C \in \MM_{m \times m}(\CC)$ with the same inertia are called \textit{congruent}.  Following directly from Proposition \ref{prop:sylvester} we obtain the following proposition (see also \cite[Thm.\,4.5.8]{HoJo90}).

\begin{proposition} \label{prop:congruence}
Let  $A,C \in \MM_{m \times m}(\CC)$ be two matrices which are either both Hermitian or skew-Hermitian. Then there exists a nonsingular matrix $S \in \MM_{n \times n}(\CC)$ so that $S^HAS=C$ if and only if $A$ and $C$ have the same inertia.
\end{proposition}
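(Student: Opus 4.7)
The plan is to derive this result directly from Proposition \ref{prop:sylvester}, treating the two directions in turn.

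For the ($\Leftarrow$) direction, suppose $A$ and $C$ share the inertia $(p,q,r)$ and the same Hermitian/skew-Hermitian type (so the same value of $\alpha$). Two applications of Proposition \ref{prop:sylvester} produce nonsingular matrices $U, V \in \MM_{m \times m}(\CC)$ with $U^H A U = D = V^H C V$, where $D$ is the common canonical diagonal form determined by $(p,q,r,\alpha)$. Then $S := U V^{-1}$ is nonsingular and a direct computation yields $S^H A S = V^{-H} U^H A U V^{-1} = V^{-H} D V^{-1} = C$.

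For the ($\Rightarrow$) direction, assume $S^H A S = C$ with $S$ nonsingular. Congruence preserves the Hermitian/skew-Hermitian type, so $A$ and $C$ share $\alpha$. The nullity entry of the inertia is immediate: since $S^H$ is invertible, the map $v \mapsto Sv$ is a linear bijection $\textnormal{null}(C) \to \textnormal{null}(A)$, and so $r_A = r_C$. For the positive/negative split I would first reduce the skew-Hermitian case to the Hermitian one: $A$ is skew-Hermitian if and only if $iA$ is Hermitian, and $S^H A S = C$ if and only if $S^H(iA)S = iC$; inspecting the canonical forms of Proposition \ref{prop:sylvester} shows that the skew-Hermitian triple $(p_A, q_A, r_A)$ of $A$ equals the Hermitian triple $(q_A, p_A, r_A)$ of $iA$, so it suffices to settle the Hermitian case.

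In the Hermitian case I would rely on the variational characterization that $q_A$ equals the maximum dimension of a subspace $W \subseteq \CC^m$ on which $w^H A w > 0$ for every nonzero $w \in W$. That this quantity agrees with the eigenvalue count from Proposition \ref{prop:sylvester} can be read off the canonical form $U^H A U = \textnormal{diag}(-I_{p_A}, I_{q_A}, 0_{r_A})$: the $U$-image of $\textnormal{span}(e_{p_A+1},\ldots,e_{p_A+q_A})$ attains the bound, whereas a dimension count forces any subspace of dimension $q_A+1$ to intersect the $U$-image of the complementary standard basis vectors nontrivially, on which $A$ is not positive. Congruence invariance of this variational quantity is immediate from $w^H C w = (Sw)^H A (Sw)$, which sends positive subspaces of $C$ bijectively to positive subspaces of $A$ via $S$; hence $q_A = q_C$, and combined with $r_A = r_C$ and $p + q + r = m$ also $p_A = p_C$. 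The only mildly subtle step is matching the variational quantity to the eigenvalue count (essentially the heart of the classical Sylvester law); the remaining reductions are routine.
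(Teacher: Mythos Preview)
Your proof is correct. The paper does not actually prove this proposition: it simply remarks that it follows directly from Proposition~\ref{prop:sylvester} and refers the reader to \cite[Thm.\,4.5.8]{HoJo90}. Your $(\Leftarrow)$ argument is exactly the intended ``direct consequence'' of the canonical form, and your $(\Rightarrow)$ argument supplies the standard variational proof of the congruence invariance of inertia that the paper leaves implicit in the name ``Sylvester's Law of Inertia.''
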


\section{Symplectic and Perplectic Diagonalizability}
\label{sec:diag}

In this section the symplectic and perplectic diagonalization of (skew)-Hamiltonian  and per(skew)-Hermitian matrices is analyzed. As those matrices need not be diagonalizable per se, cf. \cite[Ex.\,4.2.1]{GoLaRod05}, their diagonalizability has to be assumed throughout the whole section. At first, we consider arbitrary (skew)-Hermitian indefinite inner products and provide two auxiliary results related to their selfadjoint matrices. These results will turn out to be useful in Sections \ref{sec:symp_diag} and \ref{sec:perp_diag} where we derive necessary and sufficient conditions for (skew)-Hamiltonian  or per(skew)-Hermitian matrices to be diagonalizable by a symplectic (perplectic, respectively) similarity transformation. This section is based on \cite[Chap.\,9]{PS2019}.

Let $[x,y]=x^HBy$ be some (skew)-Hermitian indefinite inner product on $\CC^{m} \times \CC^{m}$ and let $A \in \MM_{m \times m}(\CC)$ be selfadjoint with respect to $[ \cdot, \cdot]$. Then, as $A^\star = B^{-1}A^HB = A$, we have $\sigma(A) = \overline{\sigma(A)}$. In particular, for each $\lambda \in \sigma(A)$, $\lambda \notin \mathbb{R}$, $\overline{\lambda}$ is an eigenvalue of $A$, too, with the same multiplicity. Proposition \ref{prop:ortheigenspaces} shows that, among the eigenvectors of $A$, those $x,y \in \CC^{m}$ corresponding to $\lambda$ and $\overline{\lambda}$, respectively, are the only candidates for having a nonzero inner product $[x,y]$. This result can also be found in, e.g., \cite[Thm.\,7.8]{Mack05}.

\begin{proposition} \label{prop:ortheigenspaces}
Let $[x,y]=x^HBy$ be some (skew)-Hermitian indefinite inner product and $A = A^\star \in \MM_{2n \times 2n}(\CC)$ be selfadjoint. Moreover, assume $x,y \in \CC^{2n}$ are eigenvectors of $A$ corresponding to some eigenvalues $\lambda, \mu \in \sigma(A)$, respectively. Then $\mu \neq \overline{\lambda}$ implies that $[x,y]=[y,x]=0$. Consequently, each eigenspace of $A$ for an eigenvalue $\lambda \notin \mathbb{R}$ of $A$ is neutral.
\end{proposition}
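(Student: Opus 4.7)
The plan is to exploit the selfadjointness identity $[Ax,y]=[x,Ay]$ (which holds since $A^\star = A$) together with the (conjugate-)linearity of the sesquilinear form in each argument. First I would compute the left-hand side as
$[Ax,y] = [\lambda x, y] = \overline{\lambda}[x,y]$
by property $(i)$ of Definition \ref{def:bilinearform}, and the right-hand side as
$[x,Ay] = [x,\mu y] = \mu[x,y]$
by property $(ii)$. Equating these yields $(\overline{\lambda}-\mu)[x,y]=0$, so the hypothesis $\mu \neq \overline{\lambda}$ forces $[x,y]=0$.

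Next, to obtain $[y,x]=0$ I would invoke the assumed (skew-)Hermitian nature of the form: $[y,x] = \pm\overline{[x,y]}$ for all $x,y$. Since $[x,y]=0$ was just established, conjugating and possibly negating still yields $0$, so $[y,x]=0$ as claimed. This part works uniformly for both the Hermitian and skew-Hermitian cases and does not require a separate argument.

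For the final ``consequently'' clause, suppose $\lambda \in \sigma(A)$ with $\lambda \notin \mathbb{R}$, and let $v_1,\ldots,v_k$ be any basis of the eigenspace of $A$ for $\lambda$. Then every $v_i,v_j$ is an eigenvector for $\lambda$, and the previous argument applied with $\mu = \lambda \neq \overline{\lambda}$ gives $[v_i,v_j]=0$ for all $i,j$. Setting $V = [v_1 \; \cdots \; v_k]$, this means all entries of $V^HBV$ vanish, so $\textnormal{rank}(V^HBV)=0$ and the eigenspace is neutral in the sense of Definition \ref{def:gramian}.

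I do not expect any real obstacle here; the whole argument is a one-line orthogonality computation plus a symmetry remark. The only small point to be careful about is the placement of complex conjugation in property $(i)$ (so that $\lambda$ becomes $\overline{\lambda}$ on the left, which is exactly what produces the condition $\mu \neq \overline{\lambda}$ rather than $\mu \neq \lambda$), and the use of the (skew-)Hermitian symmetry to deduce $[y,x]=0$ rather than re-running the same calculation with roles reversed.
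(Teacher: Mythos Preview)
Your proposal is correct and follows essentially the same approach as the paper: the paper also computes $\overline{\lambda}[x,y]=[Ax,y]=[x,A^\star y]=[x,Ay]=\mu[x,y]$ and then notes that $[x,y]=0$ iff $[y,x]=0$ (which is exactly your invocation of the (skew-)Hermitian symmetry). Your treatment of the ``consequently'' clause is more explicit than the paper's, which simply states it as an immediate consequence.
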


\begin{proof}
Under the given assumptions we have
$$ \overline{\lambda} [x,y] = [ \lambda x, y] = [Ax,y] = [x, A^\star y] = [x,Ay] = [x, \mu y] = \mu [x,y]$$ so, whenever $[x,y] \neq 0$ then $\mu = \overline{\lambda}$ has to hold. This proves the statement by contraposition noting that $[x,y]=0$ if and only if $[y,x]=0$.
\end{proof}

Now assume that $A =A^\star \in \MM_{m \times m}(\CC)$ is diagonalizable. Let $\lambda \in \sigma(A)$, $\lambda \neq \overline{\lambda}$, and suppose $v_1, \ldots , v_\ell$ and $v_{\ell+1}, \ldots , v_{2\ell}$ are eigenbases corresponding to $\lambda$ and $\overline{\lambda}$, respectively. Additionally, let $v_{2\ell+1}, \ldots , v_{m}$ be eigenvectors of $A$ completing $v_1, \dots , v_{2\ell}$ to a basis of $\CC^{m}$ and set $V = [ \, v_1 \; \cdots \; v_{m} \, ] \in \MM_{m \times m}(\CC)$. According to Proposition \ref{prop:ortheigenspaces} we have
\begin{equation} V^H B V = \left[ \begin{array}{c|c} \begin{array}{c|c} 0 & S_\ell \\ \hline  \pm S_\ell^H & 0 \end{array} & 0 \\ \hline 0 & X \end{array} \right] \in \MM_{m \times m}(\CC)\label{equ:nonsingular} \end{equation}
for some matrices $S_\ell \in \MM_{\ell \times \ell}(\CC)$ and $X \in \MM_{(m-2 \ell) \times (m - 2 \ell)}(\CC)$. In case $B=-B^H$ we have $- S_\ell^H$ in \eqref{equ:nonsingular} and $X=-X^H$ whereas we have $+ S_\ell$ and $X=X^H$ in case $B=B^H$. As $V$ and $B$ are nonsingular, so is $V^HBV$. This implies $S_\ell$ and $X$ in \eqref{equ:nonsingular} to be nonsingular, too. As $\textnormal{span}(v_1, \ldots , v_{2\ell})$ equals the direct sum of the eigenspaces of $A$ corresponding to $\lambda$ and $\overline{\lambda}$, the nonsingularity of $S_\ell$ gives the following Corollary \ref{cor:nondegeigen} taking Definition \ref{def:gramian} (1) into account.

\begin{corollary} \label{cor:nondegeigen}
Let $[x,y]=x^HBy$ be some (skew)-Hermitian indefinite inner product and let $A = A^\star \in \MM_{m \times m}(\CC)$ be selfadjoint and diagonalizable. Then, for any $\lambda \in \sigma(A)$, $\lambda \neq \overline{\lambda}$, the direct sum of the eigenspaces of $A$ corresponding to $\lambda$ and $\overline{\lambda}$ is always nondegenerate.
\end{corollary}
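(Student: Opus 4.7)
The plan is essentially to formalize the construction already sketched in the paragraph preceding the corollary. Fix $\lambda \in \sigma(A)$ with $\lambda \neq \overline{\lambda}$, let $v_1,\dots,v_\ell$ be a basis of the eigenspace for $\lambda$ and $v_{\ell+1},\dots,v_{2\ell}$ a basis of the eigenspace for $\overline{\lambda}$ (by Proposition~\ref{prop:ortheigenspaces} applied with $\mu=\lambda$, the eigenspaces for $\lambda$ and $\overline{\lambda}$ have the same dimension, which is why I use the same letter $\ell$; alternatively one can just take their individual dimensions and run the same argument). Since $A$ is diagonalizable, these vectors extend to an eigenbasis $v_1,\dots,v_m$ of $\CC^m$, giving the nonsingular matrix $V=[\,v_1\;\cdots\;v_m\,]$.

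Next, I would assemble the Gram matrix $V^HBV$. By Proposition~\ref{prop:ortheigenspaces}, any two eigenvectors whose eigenvalues are not complex-conjugates of each other pair to zero. In particular, the $v_j$ with $j\leq \ell$ pair to zero among themselves (since $\lambda \neq \overline{\lambda}$), the $v_j$ with $\ell<j\leq 2\ell$ likewise, and both blocks pair to zero with $v_{2\ell+1},\dots,v_m$ (whose eigenvalues are neither $\lambda$ nor $\overline{\lambda}$, hence not the conjugates of $\lambda$ or $\overline{\lambda}$). This produces the block structure displayed in~\eqref{equ:nonsingular}, with an $\ell \times \ell$ off-diagonal block $S_\ell$ capturing the pairings between eigenvectors for $\lambda$ and $\overline{\lambda}$.

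Now I would use nondegeneracy of $[\cdot,\cdot]$: since $B$ and $V$ are nonsingular, $V^HBV$ is nonsingular. Because the matrix is block-diagonal with one block equal to $X$ and the other to $\bigl[\begin{smallmatrix} 0 & S_\ell \\ \pm S_\ell^H & 0 \end{smallmatrix}\bigr]$, both of these blocks must be nonsingular individually. In particular $S_\ell$ is nonsingular, so the $2\ell \times 2\ell$ matrix
\[
W^HBW=\begin{bmatrix} 0 & S_\ell \\ \pm S_\ell^H & 0 \end{bmatrix}
\]
is nonsingular, where $W=[\,v_1\;\cdots\;v_{2\ell}\,]$.

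Finally, I would conclude by invoking Definition~\ref{def:gramian}(2): since $v_1,\dots,v_{2\ell}$ is a basis of the direct sum of the eigenspaces for $\lambda$ and $\overline{\lambda}$ and the corresponding Gram matrix has full rank $2\ell$, this direct sum is nondegenerate. A brief remark that nondegeneracy is independent of the choice of basis (a change-of-basis $W'=WT$ with $T$ nonsingular gives $(W')^HBW'=T^H W^HBW T$, which has the same rank) closes the argument. I do not anticipate a real obstacle: the content is essentially a bookkeeping consequence of Proposition~\ref{prop:ortheigenspaces} together with the nonsingularity of $B$, and the only subtlety is to be careful that the completion of the basis by further eigenvectors (using diagonalizability of $A$) is what allows $V^HBV$ to split into the two independent blocks in the first place.
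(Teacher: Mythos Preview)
Your proposal is correct and follows essentially the same route as the paper: the corollary is deduced directly from the block structure \eqref{equ:nonsingular} obtained via Proposition~\ref{prop:ortheigenspaces}, together with the nonsingularity of $V^HBV$. One small remark: Proposition~\ref{prop:ortheigenspaces} by itself does not give the equality of the dimensions of the eigenspaces for $\lambda$ and $\overline{\lambda}$; that comes from $A$ being similar to $A^H$ (or, as you note, simply falls out of the nonsingularity argument if you start with possibly different dimensions $\ell_1,\ell_2$, since a block-antidiagonal matrix with rectangular off-diagonal blocks cannot be invertible unless $\ell_1=\ell_2$).
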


Similarly to the derivation preceding Corollary \ref{cor:nondegeigen} one shows that the eigenspace of a selfadjoint matrix $A =A^\star \in \MM_{m \times m}(\CC)$ corresponding to some real eigenvalue $\mu \in \sigma(A)$ is always nondegenerate, too. We are now in the position to derive statements on the symplectic and perplectic diagonalizability of (skew)-Hamiltonian and per(skew)-Hermitian matrices.

\subsection{Symplectic Diagonalization of (skew)-Hamiltonian Matrices}
\label{sec:symp_diag}

The following Theorem \ref{thm:diag_sympl0} states the main result of this section characterizing those (diagonalizable) (skew)-Hamiltonian matrices which can be brought to diagonal form by a symplectic similarity transformation. Recall that, according to \eqref{equ:ex_matrices}, a diagonal skew-Hamiltonian matrix $\widetilde{D} \in \textnormal{M}_{2n \times 2n}(\CC)$  has the form
\begin{equation} \widetilde{D} = \begin{bmatrix} D & 0 \\0 &  D^H \end{bmatrix} \quad \textnormal{with} \; D = \textnormal{diag}(\, \lambda_1, \ldots , \lambda_n \, ) \in \textnormal{M}_{n \times n}(\CC). \label{equ:diagHamiltonian} \end{equation}

\begin{theorem} \label{thm:diag_sympl0}
Let $A \in \MM_{2n \times 2n}(\CC)$ be diagonalizable.
\begin{enumerate}
\item Assume that $A$ is skew-Hamiltonian. Then $A$ is symplectic diagonalizable if and only if for any real eigenvalue $\lambda \in \sigma(A)$ and some basis $v_1, \ldots , v_{m}$ of the corresponding eigenspace, the matrix $V^HJ_{2n}V$ for $V = [ \, v_1 \; \cdots \; v_{m} \, ]$ has equally many positive and negative imaginary eigenvalues.
\item Assume that $A$ is Hamiltonian. Then $A$ is symplectic diagonalizable if and only if for any purely imaginary eigenvalue $\lambda \in \sigma(A)$ and some basis $v_1, \ldots , v_{m}$ of the corresponding eigenspace, the matrix $V^HJ_{2n}V$ for $V = [ \, v_1 \; \cdots \; v_{m} \, ]$ has equally many positive and negative imaginary eigenvalues.
\end{enumerate}
\end{theorem}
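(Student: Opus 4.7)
My plan is to reduce symplectic diagonalizability to a condition on the Gramian matrices $V^H J_{2n} V$ associated with each eigenspace of $A$, and then match the hypothesized inertia data to the canonical Gramian produced by any symplectic diagonalizing basis via Sylvester's law. Throughout, one converts between the skew-Hermitian Gramians (since $J_{2n}^H = -J_{2n}$) and the Hermitian setting of Proposition \ref{prop:congruence} by multiplying by $i$.

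\textbf{Necessity (skew-Hamiltonian case).} Suppose $G^{-1}AG = \widetilde{D} = \mathrm{diag}(D, D^H)$ with $G$ symplectic. Writing $G = [g_1\;\cdots\;g_{2n}]$, the identity $G^H J_{2n} G = J_{2n}$ encodes the pairings $[g_k, g_{n+k}] = 1$ with all other inner products zero, while the $g_k$ are eigenvectors with $g_k$ for $\lambda_k$ and $g_{n+k}$ for $\overline{\lambda_k}$. For a real eigenvalue $\lambda$ occurring at indices $k_1, \ldots, k_a$ in $D$, the combined top- and bottom-block eigenvectors form a basis of the $\lambda$-eigenspace whose Gramian equals $\bigl[\begin{smallmatrix} 0 & I_a \\ -I_a & 0 \end{smallmatrix}\bigr]$; its spectrum is $\pm i$ each with multiplicity $a$. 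For any other basis $VT$, the Gramian is $T^H(V^H J_{2n} V) T$, and multiplying by $i$ turns this into a Hermitian congruence; Proposition \ref{prop:congruence} then preserves the inertia, yielding the stated condition.

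\textbf{Sufficiency (skew-Hamiltonian case).} One assembles $G$ column-wise, eigenspace by eigenspace. For non-real $\lambda$, the eigenvalue $\overline{\lambda}$ has the same geometric multiplicity $\ell$; by \eqref{equ:nonsingular} and Corollary \ref{cor:nondegeigen}, any bases of the two eigenspaces yield a combined Gramian $\bigl[\begin{smallmatrix} 0 & S_\ell \\ -S_\ell^H & 0 \end{smallmatrix}\bigr]$ with $S_\ell$ invertible, and replacing the $\overline{\lambda}$-basis by $V_{\overline{\lambda}} S_\ell^{-1}$ normalizes this to $\bigl[\begin{smallmatrix} 0 & I_\ell \\ -I_\ell & 0 \end{smallmatrix}\bigr]$. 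For a real $\lambda$ of multiplicity $m$, the eigenspace is nondegenerate (as noted after Corollary \ref{cor:nondegeigen}), so $V^H J_{2n} V$ is nonsingular and skew-Hermitian; the hypothesis forces $m = 2a$, and $iV^H J_{2n} V$ has inertia $(a, a, 0)$, matching that of $i\bigl[\begin{smallmatrix} 0 & I_a \\ -I_a & 0 \end{smallmatrix}\bigr]$. Proposition \ref{prop:congruence} then furnishes a basis change $T$ making the Gramian equal to $\bigl[\begin{smallmatrix} 0 & I_a \\ -I_a & 0 \end{smallmatrix}\bigr]$. Placing the first half of each normalized pair-basis into columns $k_1, \ldots, k_\ell \le n$ and their partners into columns $n+k_1, \ldots, n+k_\ell$ produces a matrix $G$ with $G^H J_{2n} G = J_{2n}$ block-wise, and $G^{-1}AG$ is of the prescribed diagonal form.

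\textbf{Hamiltonian case and main obstacle.} For item 2, one first establishes the analogue of Proposition \ref{prop:ortheigenspaces} for skewadjoint $A$: the identity $\overline{\lambda}[x,y] = [Ax,y] = -[x,Ay] = -\mu[x,y]$ shows that eigenvectors for $\lambda$ and $\mu$ are $[\cdot,\cdot]$-orthogonal unless $\mu = -\overline{\lambda}$. Hence conjugate pairs are replaced by pairs $\lambda \leftrightarrow -\overline{\lambda}$, the self-paired eigenvalues are the purely imaginary ones, the target diagonal form is $\widetilde{D} = \mathrm{diag}(D, -D^H)$, and the rest of the argument goes through verbatim with ``real'' replaced by ``purely imaginary''. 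The main delicate point is the systematic passage between skew-Hermitian Gramians and the Hermitian setting of Sylvester's law via multiplication by $i$; a secondary technical task is the global bookkeeping, interleaving the vectors within each eigenspace so that column $k$ is paired with column $n+k$ in the final $G$, ensuring that the block-wise normalizations combine into the single global identity $G^H J_{2n} G = J_{2n}$.
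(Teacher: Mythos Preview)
Your proof is correct and follows essentially the same approach as the paper: both directions of part 1 proceed by analyzing the Gramians $V^HJ_{2n}V$ eigenspace by eigenspace, using Proposition~\ref{prop:ortheigenspaces} and Corollary~\ref{cor:nondegeigen} for the non-real pairs and Sylvester's law (Propositions~\ref{prop:sylvester}/\ref{prop:congruence}) for the real eigenvalues, then assembling a symplectic eigenbasis. The one notable divergence is in part 2: the paper avoids repeating the argument by observing that $A$ is Hamiltonian iff $iA$ is skew-Hamiltonian, so a symplectic diagonalization of $A$ is equivalent to one of $iA$, and purely imaginary eigenvalues of $A$ correspond to real eigenvalues of $iA$; you instead rederive the skewadjoint analogue of Proposition~\ref{prop:ortheigenspaces} and rerun the construction with the pairing $\lambda\leftrightarrow -\overline{\lambda}$. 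Both routes are valid, but the $iA$ trick is shorter and makes the parallelism between the two statements transparent.
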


\begin{proof}
1. $\Rightarrow$ Let $A \in \MM_{2n \times 2n}(\CC)$ be skew-Hamiltonian, that is $A=A^\star$, and $S =[ \, s_1 \; \cdots \; s_{2n} \, ] \in \MM_{2n \times 2n}(\CC)$ symplectic such that
\begin{equation} S^{-1}AS = \begin{bmatrix} D & \\ & D^H \end{bmatrix} = S^{-1}A^\star S, \quad S^HJ_{2n}S = J_{2n}, \label{equ:sympdiag1} \end{equation}
with $D = \textnormal{diag}(\lambda_1, \ldots , \lambda_n) \in \MM_{n \times n}(\CC)$ is a (symplectic) diagonalization of $A$. If $\lambda_j \in \sigma(A)$ is real, it follows from \eqref{equ:sympdiag1} that $\lambda_j$ has even multiplicity, $2k$ say, with $k$ instances of $\lambda_j$ appearing in $D$ and $D^H$, respectively (w.\,l.\,o.\,g. on the diagonal positions $j_1, \ldots , j_k$). Let $s_{j_1}, \ldots , s_{j_k}, s_{n+j_1}, \ldots , s_{n+j_k}$ be the corresponding $2k$ eigenvectors (appearing as columns in the corresponding positions in $S$) which span the eigenspace of $A$ and $A^\star$ for $\lambda_j$. Now set $S_j := [ \, s_{j_1} \, \cdots \, s_{j_k} \, s_{n+j_1} \, \cdots \, s_{n+j_k} \, ] \in \MM_{2n \times 2k}(\CC)$. Then we have
$$ S_j^HJ_{2n}S_j = \begin{bmatrix} & I_k \\ -I_k & \end{bmatrix} \in \MM_{2k \times 2k}(\CC)$$
which follows directly from $S^HJ_{2n}S = J_{2n}$. The eigenvalues of $S_j^HJ_{2n}S_j$ are $+i$ and $-i$ both with the same multiplicity $k$. As $\lambda_j$ was arbitrary, this holds for any real eigenvalue of $A$.

$\Leftarrow$ Now let $A \in \MM_{2n \times 2n}(\CC)$ be skew-Hamiltonian and diagonalizable. Moreover assume that the condition stated above holds for all real eigenvalues of $A$. We now generate bases for the different eigenspaces of $A$ according to the following rules:
\begin{enumerate}
\item[(a)] For each pair of eigenvalues $\lambda_j, \overline{\lambda_j} \in \sigma(A)$, $\lambda_j \neq \overline{\lambda_j}$, both with multiplicity $m_j$, let $s_1, \ldots s_{m_j}$ be corresponding eigenvectors of $A$ for $\lambda_j$ and $t_1, \ldots , t_{m_j}$ corresponding eigenvectors of $A$ for $\overline{\lambda_j}$. Set $S_j = [ \, s_1 \, \cdots \, s_{m_j} \, t_1 \, \cdots \, t_{m_j} \, ] \in \MM_{2n \times 2m_j}(\CC)$. Then, according to Proposition \ref{prop:ortheigenspaces} and Corollary \ref{cor:nondegeigen} $\textnormal{span}(s_1, \ldots , s_{m_j})$ and $\textnormal{span}(t_1, \ldots , t_{m_j})$ are both neutral and $S_j^HJ_{2n}S_j$ is nonsingular. Therefore the form of $S_j^HJ_{2n}S_j$ is
    $$ S_j^HJ_{2n}S_j = \begin{bmatrix} 0 & \widehat{S}_j \\ - \widehat{S}_j^H & 0 \end{bmatrix} \in \MM_{2m_j \times 2m_j}(\CC)$$
    for some nonsingular matrix $\widehat{S}_j \in \MM_{m_j \times m_j}(\CC)$. Now, multiplying $S_j^HJ_{2n}S_j$ by $\widehat{S}_j^{-H} \oplus I_{m_j}$ and $(\widehat{S}_j^{-H} \oplus I_{m_j})^H$ (from the right and the left) we observe that
     \begin{align*}\big( \widehat{S}_j^{-H} \oplus I_{m_j} \big)^H S_j^HJ_{2n}S_j \big( \widehat{S}_j^{-H} \oplus I_{m_j} \big) &= \begin{bmatrix} \widehat{S}_j^{-1} & \\ & I_{m_j} \end{bmatrix} \begin{bmatrix} 0 & \widehat{S}_j \\ - \widehat{S}_j^H  & 0 \end{bmatrix} \begin{bmatrix} \widehat{S}_j^{-H} & \\ & I_{m_j} \end{bmatrix} \\ &= \begin{bmatrix} \vphantom{\widehat{S}_j} & I_{m_j} \\ -I_{m_j} & \vphantom{\widehat{S}_j} \end{bmatrix}. \end{align*}

    Let $w_1, \ldots , w_{2m_j}$ denote the columns of $S_j(\widehat{S}_j^{-H} \oplus I_{m_j})$ and notice that, due to the form of $\widehat{S}_j^{-H} \oplus I_{m_j}$, $w_1, \ldots , w_{m_j}$ and $w_{m_j+1} =t_1, \ldots , w_{2m_j} = t_{m_j}$ are still bases for the eigenspaces of $A$ for $\lambda_j$ and $\overline{\lambda_j}$, respectively. According to Proposition \ref{prop:ortheigenspaces}, the inner products $[w_\ell, x]$ for any $\ell=1, \ldots , 2m_j$ and any eigenvector $x$ of $A$ corresponding to some eigenvalue $\mu \in \sigma(A) \setminus \lbrace \lambda_j , \overline{\lambda_j} \rbrace$ are zero.
\item[(b)] For each $\lambda_k \in \sigma(A)$, $\lambda_k \in \mathbb{R}$,  let $s_1, \ldots , s_{2m_k}$ be a basis of the corresponding eigenspace (assuming the even multiplicity of $\lambda_k$ is $2 m_k$). For $S_k := [ \, s_1 \; \cdots \, s_{2m_k} \, ] \in \MM_{2n \times 2m_k}(\CC)$ the skew-Hermitian matrix $S_k^HJ_{2n}S_k \in \MM_{2m_k \times 2m_k}(\CC)$ is nonsingular and has, according to our assumptions, exactly $m_k$ positive and $m_k$ negative purely imaginary eigenvalues. Thus, it has the same inertia as $J_{2m_k}$ and there exists some nonsingular matrix $T_k \in \MM_{2m_k \times 2m_k}(\CC)$ such that $T_k^H(S_k^HJ_{2n}S_k)T_k = J_{2m_k}$ according to Proposition \ref{prop:sylvester}. Let $w_1, \ldots , w_{2m_k}$ denote the columns of $S_kT_k$ and note that $w_1, \ldots , w_{2m_k}$ is still a basis for the eigenspace of $A$ corresponding to $\lambda_k$. According to Proposition \ref{prop:ortheigenspaces}, the inner products $[w_\ell, x]$ for any $\ell=1, \ldots , 2m_k$ and any eigenvector $x$ of $A$ corresponding to some eigenvalue $\mu \in \sigma(A) \setminus \lbrace \lambda_k \rbrace$ are zero.
\end{enumerate}

If bases of the eigenspaces for all eigenvalues of $A$ have been constructed according to (a) if $\lambda_j \notin \mathbb{R}$ and (b) if $\lambda_k \in \mathbb{R}$, the new eigenvectors $w_1, \ldots , w_{2n}$ obtained this way are collected in a matrix $W \in \MM_{2n \times 2n}(\CC)$, i.e. $W = [ \, w_1 \; \cdots \; w_{2n} \, ].$ Note that $W$ is nonsingular and that $W^{-1}A W = D$ is diagonal. Due to the construction of $w_1, \ldots , w_{2n}$, the skew-Hermitian matrix $W^HJ_{2n} W$ has only $+1$ and $-1$ as nonzero entries. Hence, it is permutation-similar to $J_{2n}$. In other words, there exists a (real) permutation matrix $P \in \MM_{2n \times 2n}(\mathbb{R})$ with $P^H W^HJ_{2n} W P = J_{2n}$. Now $P^H W^HJ_{2n} WP = J_{2n}$ so $V := WP$ is symplectic. Moreover, $V^{-1}AV = P^TDP$ remains to be diagonal as $P$ is a permutation matrix and the statement 1. is proven.

2. If $A \in \MM_{2n \times 2n}(\CC)$ is Hamiltonian notice that $\widehat{A} := iA$ is skew-Hamiltonian. Thus, whenever $S \in \MM_{2n \times 2n}(\CC)$ is symplectic and $S^{-1}AS = D \oplus (-D^H)$ is a symplectic diagonalization of $A$ for some diagonal matrix $D \in \MM_{n \times n}(\CC)$ we have that
$$ S^{-1} \widehat{A}S = S^{-1}(iA)S = i S^{-1}AS = \begin{bmatrix} iD & \\ & -iD^H \end{bmatrix} = \begin{bmatrix} \widehat{D} & \\ & \widehat{D}^H \end{bmatrix}$$
for $\widehat{D} = iD$ is a symplectic diagonalization of $\widehat{A}$. From 1. it is known that the diagonalization $S^{-1} \widehat{A}S = \widehat{D} \oplus \widehat{D}^H$ exists if and only if for each real eigenvalue $\lambda \in \sigma(\widehat{A})$ has even multiplicity $m$ and, given any basis $v_1, \ldots , v_{m}$ of the corresponding eigenspace, the matrix $V^HJ_{2n}V$ for $V=[ \, v_1 \; \cdots \; v_m \, ]$ has equally many positive and negative purely imaginary eigenvalues. Vice versa this implies that the symplectic diagonalization $S^{-1}AS = D \oplus (-D^H)$ exists if and only if each purely imaginary eigenvalue $\mu \in \sigma(A)$ has even multiplicity $m$ and, given any basis $v_1, \ldots , v_{m}$ of the corresponding eigenspace, the matrix $V^HJ_{2n}V$ for $V=[ \, v_1 \; \cdots \; v_m \, ]$ has equally many positive and negative purely imaginary eigenvalues.
\end{proof}

The following Corollary \ref{cor:norealeigenvalues} is a direct consequence of Theorem \ref{thm:diag_sympl0} which guarantees the existence of a symplectic diagonalization whenever no real or purely imaginary eigenvalues are present. To understand Corollary \ref{cor:norealeigenvalues} correctly, zero should be regarded as both, real and purely imaginary.

\begin{corollary} \label{cor:norealeigenvalues}
\begin{enumerate}
\item A diagonalizable skew-Hamiltonian  matrix $A \in \MM_{2n \times 2n}(\CC)$ is always symplectic diagonalizable if $A$ has no purely real eigenvalues.
\item A diagonalizable Hamiltonian  matrix $A \in \MM_{2n \times 2n}(\CC)$ is always symplectic  diagonalizable if $A$ has no purely imaginary eigenvalues.
\end{enumerate}
\end{corollary}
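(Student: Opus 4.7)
The plan is to derive both parts of Corollary \ref{cor:norealeigenvalues} as immediate consequences of Theorem \ref{thm:diag_sympl0} by invoking a vacuous-quantification argument. Each part of Theorem \ref{thm:diag_sympl0} characterizes symplectic diagonalizability by a condition that is quantified over a specific subset of $\sigma(A)$: the real eigenvalues in part (1), and the purely imaginary eigenvalues in part (2). If the relevant subset of the spectrum is empty, the condition becomes a universal statement over the empty set and is therefore trivially satisfied.

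For part (1), I would start with a diagonalizable skew-Hamiltonian matrix $A \in \MM_{2n\times 2n}(\CC)$ that has no purely real eigenvalues. Then the set of $\lambda \in \sigma(A) \cap \mathbb{R}$ over which Theorem \ref{thm:diag_sympl0}(1) quantifies is empty. Hence the criterion of the theorem is met vacuously, and the theorem directly yields a symplectic $S \in \MM_{2n\times 2n}(\CC)$ with $S^{-1}AS$ diagonal. For part (2), I would argue identically: if the diagonalizable Hamiltonian matrix $A$ has no purely imaginary eigenvalues, then $\sigma(A)\cap i\mathbb{R} = \emptyset$ and Theorem \ref{thm:diag_sympl0}(2) applies trivially.

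The only subtlety worth making explicit in the write-up is the convention flagged before the statement of the corollary, namely that zero is to be regarded as both real and purely imaginary. This ensures that ``no real eigenvalues'' in part (1) automatically rules out $0 \in \sigma(A)$ (otherwise we might worry that the eigenspace of $0$, being an eigenspace for a real eigenvalue, would have to be checked separately), and analogously for part (2). With this convention, the vacuous quantification is genuinely unambiguous and no additional argument is needed.

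There is no real obstacle here; the entire content of the corollary is contained in Theorem \ref{thm:diag_sympl0}. Thus the proof would amount to two short sentences, one for each part, citing Theorem \ref{thm:diag_sympl0} and observing that the hypothesis to be verified is vacuous under the respective spectral assumption.
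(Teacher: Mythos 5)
Your argument is correct and is exactly how the paper treats this corollary: the paper offers no separate proof, simply noting that the statement is a direct consequence of Theorem \ref{thm:diag_sympl0}, and the vacuous-quantification reading you spell out (together with the convention that zero counts as both real and purely imaginary) is the intended one. No gaps.
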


\begin{example}
Let $A,B \in \MM_{n \times n}(\CC)$ be skew-Hermitian matrices. Taking \eqref{equ:ex_matrices} into account it is easy to check that the matrix
\begin{equation} M = \begin{bmatrix} A & B \\ B & -A \end{bmatrix} \in \MM_{2n \times 2n}(\CC) \label{ex:HamHerm} \end{equation}
is skew-Hamiltonian and skew-Hermitian. The skew-Hermitian structure implies that $M$ has only purely imaginary eigenvalues. Therefore, Corollary \ref{cor:norealeigenvalues} applies and, whenever $M$ is nonsingular, it can be diagonalized by a symplectic similarity transformation. The diagonalizability of $M$ is always guaranteed since any skew-Hermitian matrix can be diagonalized (by a unitary matrix). In Section \ref{sec:norm_symp} we will show that a symplectic diagonalization of $M$ can always be constructed to be unitary, too. An analogous statement holds for nonsingular matrices $M \in \MM_{2n \times 2n}(\CC)$ of the form \eqref{ex:HamHerm} where $A$ and $B$ are Hermitian. Such matrices are consequently Hermitian and Hamiltonian, i.e. they have no purely imaginary eigenvalues.
\end{example}

\subsection{Perplectic Diagonalization of per(skew)-Hermitian Matrices}
\label{sec:perp_diag}

The main result on the perplectic diagonalization of per-Hermitian and perskew-Hermitian matrices $A \in \MM_{2n \times 2n}(\CC)$ is similar to the statement from Theorem \ref{thm:diag_sympl0}. In particular, the proof of Theorem \ref{thm:diag_perpl0} below is analogous to the proof of Theorem \ref{thm:diag_sympl0}  with the only significant change being the replacement of the skew-Hermitian structures  appearing in the proof of Theorem \ref{thm:diag_sympl0} (due to the skew-Hermitian matrix $J_{2n}$) by Hermitian structures caused by $R_{2n}$. Therefore, statements on purely imaginary eigenvalues turn into statements on real eigenvalues. The proof is consequently omitted.

\begin{theorem} \label{thm:diag_perpl0}
Let $A \in \MM_{n \times n}(\CC)$ be diagonalizable.
\begin{enumerate}
\item Assume that $A$ is per-Hermitian. Then $A$ is perplectic diagonalizable if and only if for any real eigenvalue $\lambda \in \sigma(A)$ and some basis $v_1, \ldots , v_{m}$ of the corresponding eigenspace, the matrix $V^HR_{2n}V$ for $V = [ \, v_1 \; \cdots \; v_{m} \, ]$ has equally many positive and negative real eigenvalues.
\item Assume that $A$ is perskew-Hermitian. Then $A$ is perplectic diagonalizable if and only if for any purely imaginary eigenvalue $\lambda \in \sigma(A)$ and some basis $v_1, \ldots , v_{m}$ of the corresponding eigenspace, the matrix $V^HR_{2n}V$ for $V = [ \, v_1 \; \cdots \; v_{m} \, ]$ has equally many positive and negative real eigenvalues.
\end{enumerate}
\end{theorem}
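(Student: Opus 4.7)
The plan is to mirror the proof of Theorem \ref{thm:diag_sympl0} essentially line by line, replacing the skew-Hermitian form $V^HJ_{2n}V$ by the Hermitian form $V^HR_{2n}V$ throughout. Since $R_{2n}=R_{2n}^H$, each such matrix $V^HR_{2n}V$ is Hermitian and hence has real spectrum, so the relevant inertia statement concerns positive versus negative \emph{real} eigenvalues rather than purely imaginary ones. Proposition \ref{prop:ortheigenspaces} and Corollary \ref{cor:nondegeigen} apply unchanged to the Hermitian indefinite inner product $[x,y]=x^HR_{2n}y$, since they were stated for arbitrary (skew)-Hermitian forms.

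For the forward implication in part 1, I would start from a perplectic diagonalization $S^{-1}AS = D \oplus R_n D^H R_n$ of a per-Hermitian $A$ (cf.\ \eqref{equ:ex_matrices}). A real eigenvalue $\lambda$ must appear with even multiplicity $2k$, with $k$ occurrences in each block. Collecting the $2k$ corresponding columns of $S$ into $S_j$, the identity $S^HR_{2n}S = R_{2n}$ forces
\[
S_j^H R_{2n} S_j = \begin{bmatrix} 0 & I_k \\ I_k & 0 \end{bmatrix},
\]
whose eigenvalues are $+1$ and $-1$, each with multiplicity $k$. For the converse, I would build adapted eigenbases in the two cases (a) and (b) of the earlier proof. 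For a conjugate pair $\lambda,\overline{\lambda}$ with common multiplicity $m_j$, Proposition \ref{prop:ortheigenspaces} and Corollary \ref{cor:nondegeigen} yield
\[
S_j^H R_{2n} S_j = \begin{bmatrix} 0 & \widehat{S}_j \\ \widehat{S}_j^H & 0 \end{bmatrix}
\]
(the sign in the $(2,1)$-block is now $+$ because $R_{2n}$ is Hermitian), and a right-multiplication by $\widehat{S}_j^{-H} \oplus I_{m_j}$ reduces the block to $\bigl[\begin{smallmatrix} 0 & I_{m_j} \\ I_{m_j} & 0 \end{smallmatrix}\bigr]$ as before. For a real eigenvalue $\lambda$ with eigenspace basis $s_1,\ldots,s_{2m_k}$, the hypothesis says the Hermitian matrix $S_k^HR_{2n}S_k$ has $m_k$ positive and $m_k$ negative real eigenvalues, i.e.\ the same inertia as $R_{2m_k}$, so Proposition \ref{prop:congruence} produces $T_k$ with $T_k^H S_k^H R_{2n} S_k T_k = R_{2m_k}$.

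Assembling the adjusted eigenvectors into a single matrix $W$, the Hermitian matrix $W^HR_{2n}W$ has only $0$'s and $1$'s as entries and is permutation-congruent to $R_{2n}$, so there is a real permutation $P$ with $P^HW^HR_{2n}WP = R_{2n}$. Setting $V := WP$ then produces a perplectic diagonalizing matrix, since $V^{-1}AV$ is still diagonal (with possibly reordered entries that remain per-Hermitian). For part 2, I would invoke the trick that if $A$ is perskew-Hermitian then $\widehat{A} := iA$ is per-Hermitian (using $R_{2n}(iA)^HR_{2n} = -i(-A) = iA$), and the purely imaginary eigenvalues of $A$ correspond bijectively to the real eigenvalues of $\widehat{A}$ with the same eigenspaces, so part 1 applied to $\widehat{A}$ delivers the conclusion for $A$ after rescaling the diagonal.

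The only real obstacle is bookkeeping the antidiagonal structure of $R_{2n}$ when gluing the local normal forms together: in the symplectic case the natural local blocks $\bigl[\begin{smallmatrix} 0 & I \\ -I & 0 \end{smallmatrix}\bigr]$ and $J_{2m_k}$ are each permutation-congruent to a contiguous block of $J_{2n}$, whereas here one must verify that the blocks $\bigl[\begin{smallmatrix} 0 & I \\ I & 0 \end{smallmatrix}\bigr]$ and $R_{2m_k}$ assemble into $R_{2n}$ under a suitable permutation. This is a straightforward combinatorial check because both target matrices are built from the same antidiagonal pattern, so no genuinely new idea beyond the proof of Theorem \ref{thm:diag_sympl0} is required.
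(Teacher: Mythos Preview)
Your proposal is correct and mirrors the paper's own approach exactly: the paper omits the proof entirely, stating only that it is analogous to that of Theorem~\ref{thm:diag_sympl0} with the skew-Hermitian structures (arising from $J_{2n}$) replaced by Hermitian ones (arising from $R_{2n}$), so that statements about purely imaginary eigenvalues become statements about real eigenvalues. Your explicit tracking of the sign changes, the use of Proposition~\ref{prop:congruence} with $R_{2m_k}$ in place of $J_{2m_k}$, and the $\widehat{A}=iA$ reduction for part~2 are precisely the adaptations the paper has in mind, and your remark about the permutation-congruence bookkeeping for the antidiagonal pattern of $R_{2n}$ correctly identifies the only point requiring care.
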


The following Corollary \ref{cor:norealeigenvalues1} is an immediate consequence of Theorem \ref{thm:diag_perpl0} and is the analogous result to Corollary \ref{cor:norealeigenvalues} for per(skew)-Hermitian matrices.

\begin{corollary} \label{cor:norealeigenvalues1}
\begin{enumerate}
\item A diagonalizable per-Hermitian matrix $A \in \MM_{2n \times 2n}(\CC)$ is always perplectic diagonalizable if $A$ has no purely real eigenvalues.
\item A diagonalizable perskew-Hermitian matrix $A \in \MM_{2n \times 2n}(\CC)$ is always perplectic diagonalizable if $A$ has no purely imaginary eigenvalues.
\end{enumerate}
\end{corollary}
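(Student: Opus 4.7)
The plan is to deduce both statements directly from Theorem \ref{thm:diag_perpl0} by observing that, under the given hypotheses, the conditions appearing there are vacuously satisfied. This mirrors exactly how Corollary \ref{cor:norealeigenvalues} was obtained from Theorem \ref{thm:diag_sympl0}, so only a short argument is required.

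For statement (1), I would let $A \in \MM_{2n \times 2n}(\CC)$ be per-Hermitian and diagonalizable, and assume $A$ has no real eigenvalues. Theorem \ref{thm:diag_perpl0}(1) asserts that $A$ is perplectic diagonalizable if and only if, for every real $\lambda \in \sigma(A)$ and every basis $v_1,\ldots,v_m$ of the corresponding eigenspace, the matrix $V^H R_{2n} V$ (with $V = [\,v_1\;\cdots\;v_m\,]$) has equally many positive and negative real eigenvalues. Since there are no real eigenvalues to test, the quantifier ranges over the empty set, the condition is trivially fulfilled, and perplectic diagonalizability follows.

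For statement (2), I would argue identically using Theorem \ref{thm:diag_perpl0}(2): if $A$ is perskew-Hermitian, diagonalizable, and has no purely imaginary eigenvalues, then the set of eigenvalues for which the inertia condition on $V^H R_{2n} V$ must be checked is empty, so the criterion holds vacuously and $A$ is perplectic diagonalizable.

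There is no real obstacle here; the only thing to be mindful of is that the convention adopted after Corollary \ref{cor:norealeigenvalues} treats $0$ as both real and purely imaginary, so the assumption ``no real eigenvalues'' (respectively ``no purely imaginary eigenvalues'') also excludes the zero eigenvalue. Under this reading, the empty-quantifier argument really does cover every eigenvalue for which Theorem \ref{thm:diag_perpl0} imposes a condition, and no additional verification is needed.
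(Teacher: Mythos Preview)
Your argument is correct and matches the paper's treatment: the corollary is stated as an immediate consequence of Theorem~\ref{thm:diag_perpl0} with no separate proof, exactly because the inertia condition is vacuous when there are no real (respectively purely imaginary) eigenvalues. Your remark about the convention on $0$ is appropriate and consistent with the paper.
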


\section{Normal Structured Matrices}
\label{sec:normal}
In this section we analyze the matrix structures from Section \ref{sec:diag} assuming the matrix at hand is additionally normal.
Recall that a matrix $A$ is called normal if $A^HA = AA^H$ holds. It is well-known that for any normal matrix $A \in \MM_{2n \times 2n}(\CC)$ there exists a unitary matrix $Q \in \textnormal{M}_{2n \times 2n}(\CC)$, so that $Q^HAQ = D = \textnormal{diag}(\lambda_1, \ldots , \lambda_{2n})$ is diagonal (where $\lambda_1, \ldots , \lambda_{2n} \in \mathbb{C}$ are the eigenvalues of $A$) \cite{Grone87}.
Now partition $Q$ and $D$ as $Q = [ \, Q_1 \quad Q_2 \, ]$ with $Q_1, Q_2 \in \textnormal{M}_{2n \times n}(\CC)$ and $D = D_1 \oplus D_2$ with  $D_1 = \textnormal{diag}(\lambda_1, \ldots , \lambda_n), D_2 = \textnormal{diag}(\lambda_{n+1}, \ldots , \lambda_{2n}) \in \textnormal{M}_{n \times n}(\CC)$. We now obtain from $Q^HAQ = D$ that
\begin{equation} A = \begin{bmatrix} Q_1 & Q_2 \end{bmatrix} \begin{bmatrix} D_1 & \\ & D_2 \end{bmatrix} \begin{bmatrix} Q_1^H \\ Q_2^H \end{bmatrix} = Q_1D_1Q_1^H + Q_2D_2Q_2^H =: E + F \label{equ:BasicDecomp} \end{equation}
holds, where $E = Q_1D_1Q_1^H, F = Q_2D_2Q_2^H \in \textnormal{M}_{2n \times 2n}(\CC)$. Notice that $E$ and $F$ are normal for themselves. Moreover, since $Q$ is unitary, i.e. $Q^HQ = QQ^H = I_n$, we have $Q_1^HQ_2 = Q_2^HQ_1 = 0$. It is now seen directly that $EF = FE = 0$ holds. Beside this property there are no more obvious relations between $E$ and $F$. This situation changes whenever the normal matrix $A$ is (skew)-Hamiltonian or per(skew)-Hermitian. In case of symplectic or perplectic diagonalizability, the matrices $E$ and $F$ are related in a particular way. This relation between $E$ and $F$ is investigated in this section giving some new insights on the symplectic and perplectic diagonalization of those matrices. To this end, the following subsection provides some facts about Lagrangian and neutral subspaces which will be of advantage for our discussion in the sequel. This section is based on \cite[Chap.\,10]{PS2019}.

\subsection{Lagrangian Subspaces}
\label{sec:Lagrangian}
Let $[x,y]=x^HBy$ be either the perplectic form with $B=R_{2n}$ or the symplectic form with $B=J_{2n}$ on $\CC^{2n} \times \CC^{2n}$. In this section we briefly collect some information about neutral subspaces\footnote{The results from this section (in particular Corollary \ref{cor:contained}) are likely to be known \\ although they are not readily found in the literature. They have already been stated in \cite[Sec.\,10.1]{PS2019}.} with respect to the indefinite inner product $[x,y]=x^HBy$.
At first, it is obvious that the set of all neutral subspaces in $\CC^{2n}$ constitutes a partial order under the relation of set-inclusion. That is, for any neutral subspaces $F,G,H \subseteq \CC^{2n}$ we have reflexivity ($F \subseteq F$), transitivity ($F \subseteq G, G \subseteq H$ yields $F \subseteq H$) and anti-symmetry ($F \subseteq G, G \subseteq F$ yields $F=G$). Moreover, for any chain of neutral subspaces $F_1 \subseteq F_2 \subseteq \cdots \subseteq F_k$ the space $F_k$ contains all other spaces from this chain \cite[Def.\,O-1.6]{Gierz03}. In other words, each chain of subspaces has an neutral subspace as an upper bound. According to the lemma of Zorn \cite{Zorn35}, these facts lead to the observation that the (partially ordered) set of neutral subspaces has maximal elements.  The next proposition presents an upper bound for the dimensions of neutral subspaces.

\begin{proposition} \label{prop:maxdimisotrop} \index{dimension!neutral subspace} \index{subspace!neutral!maximal}
For the symplectic inner product $[x,y]=x^HJ_{2n}y$ and the perplectic inner product $[x,y]=x^HR_{2n}y$ on $\CC^{2n} \times \CC^{2n}$ the maximal possible dimension of an neutral subspace is $n$.
\end{proposition}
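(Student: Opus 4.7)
The plan is to give a unified argument that works for both $B = J_{2n}$ and $B = R_{2n}$, using only the fact that these matrices are nonsingular. The idea is to translate the neutrality condition, which is expressed in terms of the indefinite inner product $[\cdot,\cdot]$, into an orthogonality statement with respect to the standard Euclidean inner product, where dimension counting is straightforward.

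First I would fix a neutral subspace $\mathcal{S} \subseteq \CC^{2n}$ of dimension $k$, pick a basis $v_1,\ldots,v_k$, and unravel the neutrality condition $V^HBV=0$: for every pair $x,y \in \mathcal{S}$ we have $x^H(By)=0$, which says precisely that the vector $By$ lies in the standard Euclidean orthogonal complement $\mathcal{S}^\perp$. Thus $B\mathcal{S} \subseteq \mathcal{S}^\perp$. Since $B \in \{J_{2n}, R_{2n}\}$ is nonsingular, multiplication by $B$ is a bijection, so $\dim(B\mathcal{S}) = \dim(\mathcal{S}) = k$, while $\dim(\mathcal{S}^\perp) = 2n-k$. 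Combining these yields $k \le 2n-k$ and therefore $k \le n$.

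Second, I would show this bound is attained by exhibiting an explicit $n$-dimensional neutral subspace in each case. A single candidate works for both forms, namely $\mathcal{S}_0 := \textnormal{span}(e_1,\ldots,e_n)$. A direct block computation shows that both
\[
J_{2n} = \begin{bmatrix} 0 & I_n \\ -I_n & 0 \end{bmatrix}, \qquad R_{2n} = \begin{bmatrix} 0 & R_n \\ R_n & 0 \end{bmatrix}
\]
have zero $(1,1)$-block, so for any $x,y \in \mathcal{S}_0$ (which have their lower half equal to zero) one has $x^HJ_{2n}y = 0$ and $x^HR_{2n}y = 0$. Hence $\mathcal{S}_0$ is neutral with respect to either form and $\dim(\mathcal{S}_0)=n$, certifying that $n$ is indeed the maximal dimension.

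There isn't really a hard step here; the only thing to be careful about is using the nonsingularity of $B$ at the right place, since it is what guarantees $\dim(B\mathcal{S}) = k$ (equivalently, it is where the assumption that $[\cdot,\cdot]$ is an indefinite inner product, not merely a sesquilinear form, enters). The argument does not use the (skew-)Hermitian symmetry of $J_{2n}$ or $R_{2n}$, so the upper bound $k\le n$ actually holds for any nondegenerate sesquilinear form on $\CC^{2n}$; the special structure of $J_{2n}$ and $R_{2n}$ is used only to write down the explicit $n$-dimensional example.
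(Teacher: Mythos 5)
Your proof is correct, and it takes a genuinely different (and more self-contained) route than the paper. The paper simply invokes \cite[Thm.\,2.3.4]{GoLaRod05}, which states that for a nondegenerate Hermitian form $[x,y]=x^HBy$ the maximal dimension of a neutral subspace equals $\min(p,q)$, where $(p,q)$ is the inertia of $B$; it then observes that $R_{2n}$ and $iJ_{2n}$ each have eigenvalues $+1$ and $-1$ with multiplicity $n$, and that $iJ_{2n}$ has the same neutral subspaces as $J_{2n}$. Your argument instead derives the upper bound from scratch: neutrality of $\mathcal{S}$ means $B\mathcal{S}\subseteq\mathcal{S}^{\perp}$ in the Euclidean sense, and nonsingularity of $B$ forces $k=\dim(B\mathcal{S})\le 2n-k$. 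This is elementary, treats both forms uniformly without the $iJ_{2n}$ trick, and correctly isolates nondegeneracy as the only hypothesis needed for the bound; your explicit example $\operatorname{span}(e_1,\ldots,e_n)$ cleanly certifies sharpness. The trade-off is that the cited inertia-based theorem is sharper in general: for a Hermitian $B$ with unbalanced inertia the true maximum is $\min(p,q)$, which can be strictly smaller than $\lfloor m/2\rfloor$, whereas your dimension count only ever yields $\lfloor m/2\rfloor$. For $J_{2n}$ and $R_{2n}$ the two bounds coincide, so nothing is lost here.
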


\begin{proof}
For the Hermitian form $[x,y]=x^HR_{2n}y$ on $\CC^{2n} \times \CC^{2n}$ the statement is proven in \cite[Thm.\,2.3.4]{GoLaRod05} noting that $R_{2n}$ has only the eigenvalues $+1$ and $-1$ with multiplicity $n$. The statement for the symplectic form follows from the same theorem taking into account that the skew-Hermitian form $[x,y]=x^HJ_{2n}y$ and the Hermitian form $[x,y]=x^H(iJ_{2n})y$ have the same neutral subspaces and $iJ_{2n}$ has eigenvalues $+1$ and $-1$ with multiplicities $n$.
\end{proof}

Notice that $\textnormal{im}(S_1)$ and $\textnormal{im}(S_2)$ for any symplectic matrix $[ \, S_1 \quad S_2 \, ] \in \MM_{2n \times 2n}(\CC)$, $S_j \in \MM_{2n \times n}(\CC)$, are neutral of dimension $n$, i.e. Lagrangian (the same holds analogously for perplectic matrices). Thus, the bound given in Proposition \ref{prop:maxdimisotrop} is in both cases sharp. Now it is clear that $\textnormal{im}(S_1)$ has to be a maximal neutral subspace. The following proposition makes a statement on the dimensions of all other maximal neutral subspaces.

\begin{proposition} \label{prop:dimensionmaxsubspace}
For the symplectic inner product $[x,y]=x^HJ_{2n}y$ and the perplectic inner product $[x,y]=x^HR_{2n}y$ on $\CC^{2n} \times \CC^{2n}$ all maximal neutral subspaces have the same dimension. In particular, an neutral subspace is maximal if and only if it is Lagrangian.
\end{proposition}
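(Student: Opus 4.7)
The plan is to prove the two directions of the biconditional separately. The implication ``Lagrangian $\Rightarrow$ maximal neutral'' is immediate from Proposition \ref{prop:maxdimisotrop}: a Lagrangian subspace already attains the maximum possible dimension $n$ for a neutral subspace, so it cannot be properly contained in another neutral subspace.

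For the converse, I would assume $\mathcal{S}$ is neutral with $k := \dim \mathcal{S} < n$ and exhibit a neutral subspace strictly containing it. It suffices to produce a vector $v \in \CC^{2n} \setminus \mathcal{S}$ with $[v, s] = 0$ for all $s \in \mathcal{S}$ and $[v, v] = 0$, since then $\mathcal{S} + \textnormal{span}(v)$ is neutral and strictly larger than $\mathcal{S}$. The natural place to look for $v$ is the $B$-orthogonal complement
$$\mathcal{S}^\perp := \lbrace w \in \CC^{2n} \, | \, [w, s] = 0 \; \textnormal{for all } s \in \mathcal{S} \rbrace.$$
A rank-nullity argument using the nonsingularity of $B$ gives $\dim \mathcal{S}^\perp = 2n - k$, and neutrality of $\mathcal{S}$ gives $\mathcal{S} \subseteq \mathcal{S}^\perp$. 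A dimension count identifies $\mathcal{S}$ as precisely the radical of the restriction $[\cdot, \cdot]|_{\mathcal{S}^\perp}$, so the form descends to a nondegenerate (skew-)Hermitian form on the $(2n-2k)$-dimensional quotient $\mathcal{S}^\perp/\mathcal{S}$ in which $[v,v]$ is well-defined on cosets.

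The crucial technical step is to determine the inertia of this induced form; I claim it equals $(n-k, n-k, 0)$. To see this, I would choose a basis of $\CC^{2n}$ adapted to the chain $\mathcal{S} \subseteq \mathcal{S}^\perp \subseteq \CC^{2n}$ and examine the resulting Gram matrix. The neutrality of $\mathcal{S}$ and the defining property of $\mathcal{S}^\perp$ force this Gram matrix into a block form in which the $\mathcal{S}$-$\mathcal{S}$ block vanishes and the $\mathcal{S}$-(complement-of-$\mathcal{S}^\perp$) block $A \in \MM_{k \times k}(\CC)$ is nonsingular (this is where the nondegeneracy of $B$ enters crucially). Clearing the remaining inessential blocks via elementary congruence operations (after multiplying by $i$ in the symplectic case to reduce to the Hermitian setting of Proposition \ref{prop:sylvester}) decomposes the full Gram matrix as a direct sum of $C$ and a hyperbolic $2k \times 2k$ block $\bigl[\begin{smallmatrix} 0 & A \\ A^H & 0 \end{smallmatrix}\bigr]$ of inertia $(k, k, 0)$. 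Since the full Gram matrix has inertia $(n, n, 0)$, Sylvester's Law forces $C$ to have inertia $(n-k, n-k, 0)$.

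With this inertia established, a final application of Sylvester's Law yields cosets $u_+, u_- \in \mathcal{S}^\perp/\mathcal{S}$ with $[u_+, u_+] = -[u_-, u_-] \neq 0$ and $[u_+, u_-] = 0$; then $u_+ + u_-$ is a nonzero isotropic class, and any lift $v \in \mathcal{S}^\perp \setminus \mathcal{S}$ satisfies $[v,v] = 0$, completing the extension. I expect the inertia computation to be the main obstacle: it would not suffice to know merely that the induced form is indefinite, one needs the positive and negative inertia parts to be exactly balanced, which requires the complete congruence bookkeeping outlined above.
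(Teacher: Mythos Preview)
Your argument is correct and self-contained, but it is quite different from the paper's proof. The paper disposes of the proposition in two lines: for the Hermitian form $[x,y]=x^HR_{2n}y$ it simply cites Bourbaki \cite[\S 4.2]{Bourbaki07}, and for the symplectic form it invokes the same trick already used in Proposition~\ref{prop:maxdimisotrop}, namely that $[x,y]=x^HJ_{2n}y$ and the Hermitian form $[x,y]=x^H(iJ_{2n})y$ share the same neutral subspaces. What you have written is essentially the standard proof of the cited Bourbaki result, carried out explicitly via the Witt-type extension step: pass to $\mathcal{S}^\perp/\mathcal{S}$, identify its inertia as $(n-k,n-k,0)$ by congruence bookkeeping against the known inertia $(n,n,0)$ of $B$ (or $iB$), and produce an isotropic coset. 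Your route is more elementary in that it avoids an external reference and stays entirely within the tools already set up in the paper (Propositions~\ref{prop:sylvester} and~\ref{prop:congruence}); the paper's route is shorter but outsources the work.

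One minor remark on your closing self-assessment: for the inductive extension step as you have set it up, mere indefiniteness of the induced form on $\mathcal{S}^\perp/\mathcal{S}$ \emph{would} in fact suffice, since any nondegenerate indefinite Hermitian form already admits an isotropic vector (take $u_+ + u_-$ as you do). The exact balance $(n-k,n-k,0)$ is a stronger conclusion than you need per step, though of course your inertia computation is the natural way to certify indefiniteness in the first place, and it has the pleasant side effect of showing directly that the extension can be iterated all the way up to dimension~$n$.
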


\begin{proof}
The statement for the Hermitian form $[x,y]=x^HR_{2n}y$ is proven in \cite[\S\,4.2]{Bourbaki07}. The statement on the symplectic form $[x,y]=x^HJ_{2n}y$ follows again from the fact that the Hermitian form $[x,y]=x^H(iJ_{2n})y$ has the same neutral subspaces as $[x,y]=x^HJ_{2n}y$.
\end{proof}

The statement of the following corollary will be important in the upcoming sections.

\begin{corollary} \label{cor:contained}
For the symplectic inner product $[x,y]=x^HJ_{2n}y$ and the perplectic inner product $[x,y]=x^HR_{2n}y$ on $\CC^{2n} \times \CC^{2n}$, each neutral subspace of $\CC^{2n}$ is contained in a Lagrangian subspace.
\end{corollary}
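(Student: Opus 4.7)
The plan is to combine the Zorn-type extension argument already sketched at the beginning of Section \ref{sec:Lagrangian} with Proposition \ref{prop:dimensionmaxsubspace}. Concretely, fix a neutral subspace $\mathcal{N} \subseteq \CC^{2n}$ with respect to $[x,y]=x^HBy$ (where $B=J_{2n}$ or $B=R_{2n}$) and consider the collection $\mathcal{P}$ of all neutral subspaces of $\CC^{2n}$ containing $\mathcal{N}$, partially ordered by set-inclusion. This collection is nonempty since $\mathcal{N} \in \mathcal{P}$.

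Next I would verify that $\mathcal{P}$ satisfies the hypotheses of Zorn's lemma. Any chain $\mathcal{N} \subseteq F_1 \subseteq F_2 \subseteq \cdots$ in $\mathcal{P}$ consists of subspaces of the finite-dimensional space $\CC^{2n}$ and hence has bounded dimension, so it must stabilize and admits a maximum element (which is itself neutral, being one of the $F_i$ and containing $\mathcal{N}$). Consequently $\mathcal{P}$ possesses a maximal element $\mathcal{L}$.

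I would then argue that $\mathcal{L}$ is a maximal neutral subspace of $\CC^{2n}$ in the absolute sense, not just relative to $\mathcal{P}$: if some neutral subspace $\mathcal{L}'$ strictly contained $\mathcal{L}$, then $\mathcal{L}' \supseteq \mathcal{L} \supseteq \mathcal{N}$ would place $\mathcal{L}'$ in $\mathcal{P}$, contradicting the maximality of $\mathcal{L}$ in $\mathcal{P}$. Applying Proposition \ref{prop:dimensionmaxsubspace}, which identifies the maximal neutral subspaces with the Lagrangian ones, shows that $\mathcal{L}$ is Lagrangian, and by construction $\mathcal{N} \subseteq \mathcal{L}$, completing the proof.

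There is no real obstacle here: all the serious content sits in Proposition \ref{prop:maxdimisotrop} (dimension bound $n$) and Proposition \ref{prop:dimensionmaxsubspace} (every maximal neutral subspace has the maximal dimension $n$). The only observation required beyond Zorn's lemma is the trivial remark that, once $\mathcal{N}$ is fixed, maximality of $\mathcal{L}$ among neutral subspaces containing $\mathcal{N}$ coincides with maximality of $\mathcal{L}$ among all neutral subspaces of $\CC^{2n}$.
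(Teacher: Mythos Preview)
Your argument is correct and follows essentially the same route as the paper: apply the Zorn-type reasoning from the beginning of Section~\ref{sec:Lagrangian} to obtain a maximal neutral subspace containing the given one, then invoke Proposition~\ref{prop:dimensionmaxsubspace} to conclude it is Lagrangian. You are simply more explicit than the paper about restricting the poset to neutral subspaces containing $\mathcal{N}$ and about why maximality there coincides with absolute maximality.
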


\begin{proof}
Let $F \subseteq \CC^{2n}$ be any neutral subspace. Then $\dim(F) \leq n$ holds according to Proposition \ref{prop:maxdimisotrop}. As the set of all neutral subspaces of $\CC^{2n}$ is partially ordered and has maximal elements, there is always a maximal neutral subspace $G \subseteq \CC^{2n}$ that contains $S$. As all maximal neutral subspaces are Lagrangian according to Proposition \ref{prop:dimensionmaxsubspace}, the statement follows.
\end{proof}

\subsection{Normal (skew)-Hamiltonian Matrices and Symplectic \\ Diagonalizability}
\label{sec:norm_symp}

In this section we consider normal (skew)-Hamiltonian matrices and analyze their properties with respect to (simultaneous) symplectic and unitary diagonalization. A key fact used in the subsequent analysis is that matrices which are unitary and symplectic (for which we use the abbreviation \textit{unitary-symplectic}) have a very special form, cf. Proposition \ref{prop:unitarysymplectic} (see also \cite{PaVL81}). Theorem \ref{thm:unitsympdiag} shows that unitary and symplectic diagonalizations of any normal (skew)-Hamiltonian matrix are always compatible and simultaneously achievable. This is the basic insight underlying the decompositions presented in Theorem \ref{thm:master1}.

\begin{proposition} \label{prop:unitarysymplectic}
A matrix $Q \in \MM_{2n \times 2n}(\CC)$ is unitary-symplectic if and only if $Q = [ \, V \quad J_{2n}^TV \, ]$ for some matrix $V \in \textnormal{M}_{2n \times n}(\CC)$ with $V^HV = I_n$ and $V^HJ_{2n}V = 0$.
\end{proposition}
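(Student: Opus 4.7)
The plan is to split $Q$ column-wise as $Q = [\,V \; W\,]$ with $V, W \in \MM_{2n \times n}(\CC)$, expand the unitary condition $Q^H Q = I_{2n}$ and the symplectic condition $Q^H J_{2n} Q = J_{2n}$ as $2 \times 2$ block identities, and then show that $W$ is forced to equal $J_{2n}^T V$. All calculations rely on the three elementary identities $J_{2n}^T = -J_{2n}$, $J_{2n}^2 = -I_{2n}$, and $J_{2n} J_{2n}^T = I_{2n}$, which follow immediately from the block definition of $J_{2n}$.

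For the forward direction, expanding the unitary condition by blocks yields $V^H V = I_n$, $W^H W = I_n$, and $V^H W = 0$, while expanding the symplectic condition yields $V^H J_{2n} V = 0$, $W^H J_{2n} W = 0$, and $V^H J_{2n} W = I_n$. In particular $V$ already satisfies both constraints stated in the proposition; it remains to identify $W$. Setting $U := J_{2n}^T V = -J_{2n} V$, a short computation using $J_{2n}^2 = -I_{2n}$ gives $U^H U = V^H V = I_n$ and $U^H V = V^H J_{2n} V = 0$. Hence the columns of $[\,V \; U\,]$ are orthonormal and span all of $\CC^{2n}$, so $VV^H + UU^H = I_{2n}$. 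Applying this to $W$ and using $V^H W = 0$ gives $W = U(U^H W) =: UR$ with $R \in \MM_{n \times n}(\CC)$ unitary. Substituting into $V^H J_{2n} W = I_n$ and using $J_{2n} U = -J_{2n}^2 V = V$, I obtain $R = V^H V R = V^H J_{2n} U R = V^H J_{2n} W = I_n$, so $W = U = J_{2n}^T V$.

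For the converse, assuming $V^H V = I_n$ and $V^H J_{2n} V = 0$ and setting $Q = [\,V \; J_{2n}^T V\,]$, I would verify $Q^H Q = I_{2n}$ and $Q^H J_{2n} Q = J_{2n}$ by direct $2 \times 2$ block multiplication; each of the four resulting blocks collapses immediately using the three identities for $J_{2n}$ together with the hypotheses on $V$. This step is pure bookkeeping.

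The only genuine point of substance in the proof is pinning $W$ down to exactly $J_{2n}^T V$ rather than merely to something in the same Grassmannian class; this is cleanly resolved by first noting that $\textnormal{im}(W)$ coincides with $\textnormal{im}(V)^\perp = \textnormal{im}(U)$ and then using the \emph{non-symmetric} identity $V^H J_{2n} W = I_n$ to eliminate the residual unitary ambiguity $R$. Everything else amounts to careful but routine block bookkeeping.
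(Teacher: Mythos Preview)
Your proof is correct, but the forward direction takes a different route than the paper's. The paper observes that if $Q$ is unitary and symplectic, then multiplying $Q^HJ_{2n}Q = J_{2n}$ on the left by $Q$ yields $J_{2n}Q = QJ_{2n}$; from this commutation relation the block identity $J_{2n}Q_1 = -Q_2$, i.e. $Q_2 = J_{2n}^TQ_1$, drops out in one line, and $Q_1^HJ_{2n}Q_1 = 0$ is read off directly from the $(1,1)$ block of $Q^HJ_{2n}Q = J_{2n}$. Your argument instead expands both the unitary and symplectic conditions block-wise, builds the auxiliary unitary matrix $[\,V \; J_{2n}^TV\,]$, uses completeness to write $W = (J_{2n}^TV)R$, and finally kills the residual unitary factor $R$ via the off-diagonal identity $V^HJ_{2n}W = I_n$. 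Both arguments are valid; the paper's is shorter because the single observation that $Q$ commutes with $J_{2n}$ encodes all the block relations at once, whereas your approach trades that insight for a more explicit projection argument that makes every step visible. For the converse, your plan to verify both $Q^HQ = I_{2n}$ and $Q^HJ_{2n}Q = J_{2n}$ by block multiplication is exactly what is needed (and is in fact slightly more thorough than the paper, which writes out only the symplectic verification).
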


\begin{proof}
Let $Q  = [ \, Q_1 \quad Q_2 \, ] $ be unitary-symplectic with $Q_1, Q_2 \in \textnormal{M}_{2n \times n}(\CC).$ As $Q$ is unitary we have $Q^HQ=I_{2n}$ and as it is symplectic $Q^HJ_{2n}Q = J_{2n}$ holds. Multiplying the latter with $Q$ from the left gives $J_{2n}Q = QJ_{2n}$, so $Q$ commutes with $J_{2n}$. From this relation it follows that $J_{2n}Q_1 = -Q_2$, i.e. $Q_2 = J_{2n}^TQ_1.$ Moreover, from $Q^HJ_{2n}Q = J_{2n}$ it follows that $Q_1^HJ_{2n}Q_1 = 0$. Now let $Q = [ \, V \quad J_{2n}^TV \, ]$ with $V^HV = I_n$ and $V^HJ_{2n}V = 0$ be given. We have
\begin{align*} \begin{bmatrix} V & J_{2n}^TV \end{bmatrix}^HJ_{2n} \begin{bmatrix} V & J_{2n}^TV \end{bmatrix} &= \begin{bmatrix} V^H \\ V^HJ_{2n} \end{bmatrix} J_{2n} \begin{bmatrix} V & J_{2n}^T V \end{bmatrix} \\ &= \begin{bmatrix} V^HJ_{2n}V & V^HV \\ - V^HV & - V^HJ_{2n}^TV \end{bmatrix} = \begin{bmatrix} & I_n \\ - I_n & \end{bmatrix}. \end{align*}

which yields $Q^HJ_{2n}Q = J_{2n}$. This completes the proof.
\end{proof}

In other words, Proposition \ref{prop:unitarysymplectic} states that $Q = [ \, V \quad J_{2n}^TV \, ] \in \MM_{2n \times 2n}(\CC)$ is unitary-symplectic if and only if the columns of $V \in \MM_{2n \times n}(\CC)$ are orthonormal and span a Lagrangian subspace. Recall that a diagonal Hamiltonian matrix $\widetilde{D} \in \textnormal{M}_{2n \times 2n}(\CC)$  has the form
\begin{equation} \widetilde{D} = \begin{bmatrix} D & 0 \\0 & - D^H \end{bmatrix} \quad \textnormal{with} \; D = \textnormal{diag}(\, \lambda_1, \ldots , \lambda_n \, ) \in \textnormal{M}_{n \times n}(\CC). \label{equ:diagHamiltonian00} \end{equation}
The following Theorem \ref{thm:unitsympdiag} gives a condition for the existence of a unitary-symplectic diagonalization of a normal (skew)-Hamiltonian matrix. In particular, it turns out that the symplectic diagonalizability is always sufficient. We prove the statement only for Hamiltonian matrices as the proof works analogously in the skew-Hamiltonian case.

\begin{theorem} \label{thm:unitsympdiag}
A normal (skew)-Hamiltonian matrix $A \in \textnormal{M}_{2n \times 2n}(\CC)$ is symplectic diagonalizable if and only if it is unitary-symplectic diagonalizable.
\end{theorem}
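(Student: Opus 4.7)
The reverse implication is immediate: any unitary-symplectic diagonalization is, in particular, a symplectic one. For the forward direction I treat the Hamiltonian case; the skew-Hamiltonian one is entirely analogous after replacing the pairing $\lambda\leftrightarrow-\bar\lambda$ by $\lambda\leftrightarrow\bar\lambda$. The plan is to build a matrix $V \in \MM_{2n \times n}(\CC)$ whose columns are eigenvectors of $A$ satisfying $V^HV = I_n$ and $V^HJ_{2n}V = 0$; then $Q := [\,V, J_{2n}^TV\,]$ will be unitary-symplectic by Proposition \ref{prop:unitarysymplectic}, and $Q^HAQ$ will be diagonal because every column of $Q$ is an eigenvector of $A$.

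Two structural facts drive the construction. First, since $A$ is normal, $\CC^{2n}$ is the Euclidean-orthogonal direct sum of the eigenspaces $E_\lambda$ of $A$. Second, for Hamiltonian $A$ the identity $A^H = J_{2n}AJ_{2n}$ combined with normality gives: if $Av = \lambda v$, then $A(J_{2n}^Tv) = -\bar\lambda(J_{2n}^Tv)$, so $J_{2n}^T$ restricts to a unitary isomorphism $E_\lambda \to E_{-\bar\lambda}$, and in particular $E_\mu$ is $J_{2n}^T$-invariant whenever $\mu$ is purely imaginary. For each eigenvalue pair with $\lambda \neq -\bar\lambda$ I pick an arbitrary Euclidean-orthonormal basis $V_\lambda$ of $E_\lambda$; then $J_{2n}^TV_\lambda$ is automatically an orthonormal basis of $E_{-\bar\lambda}$, and $V_\lambda^HJ_{2n}V_\lambda = 0$ holds because $E_\lambda$ is neutral under $[\cdot,\cdot]$ (the Hamiltonian analog of Proposition \ref{prop:ortheigenspaces}, obtained e.g.\ by applying that result to the skew-Hamiltonian matrix $iA$).

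The main obstacle, and the only step that genuinely uses the symplectic-diagonalizability hypothesis, is the construction of the block inside each purely imaginary eigenspace $E_\mu$. On $E_\mu$ the restriction $J := J_{2n}^T|_{E_\mu}$ is unitary with $J^2 = -I$, so the spectral theorem yields an Euclidean-orthogonal decomposition $E_\mu = E^+ \oplus E^-$ into the $\pm i$-eigenspaces of $J$. In any orthonormal basis $V_0$ of $E_\mu$, the skew-Hermitian matrix $V_0^HJ_{2n}V_0 = -V_0^HJV_0$ has eigenvalues $\pm i$ with multiplicities $\dim E^\mp$. Invoking Theorem \ref{thm:diag_sympl0}(2) forces these multiplicities to coincide, giving $\dim E^+ = \dim E^- = m$ where $2m = \dim E_\mu$. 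Fixing orthonormal bases $e_1,\dots,e_m$ of $E^+$ and $f_1,\dots,f_m$ of $E^-$, I set $w_j := (e_j+f_j)/\sqrt{2}$; because $E^+\perp E^-$ and $Jw_k = (ie_k - if_k)/\sqrt{2}$, the resulting matrix $W := [\,w_1\ \cdots\ w_m\,]$ satisfies $W^HW = I_m$ and $W^HJ_{2n}W = 0$ by a direct computation that reduces to the cancellation $\tfrac{1}{2}(i\delta_{jk} - i\delta_{jk}) = 0$.

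Concatenating the blocks $V_\lambda$ (one for each non-purely-imaginary eigenvalue pair) and the $W$-blocks (one for each purely imaginary eigenvalue) into a single matrix $V \in \MM_{2n \times n}(\CC)$, the identity $V^HV = I_n$ follows from Euclidean orthogonality across distinct eigenspaces of $A$, and $V^HJ_{2n}V = 0$ follows from the within-block identities already established together with cross-block vanishing supplied once more by the analog of Proposition \ref{prop:ortheigenspaces}. Then $Q := [\,V, J_{2n}^TV\,]$ is the sought unitary-symplectic diagonalizer of $A$.
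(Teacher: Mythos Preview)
Your proof is correct, but it takes a genuinely different route from the paper's. The paper starts directly from the given symplectic diagonalizer $T=[\,T_1\ T_2\,]$: since $T$ is symplectic, $\textnormal{span}(T_1)$ is already Lagrangian, and the columns of $T_1$ are eigenvectors of $A$. The paper simply Gram--Schmidt orthonormalizes the columns of $T_1$ within each eigenvalue block (normality makes distinct eigenspaces Euclidean-orthogonal), obtaining $S$ with $\textnormal{span}(S)=\textnormal{span}(T_1)$ still Lagrangian; then $\widehat{S}=[\,S\ J_{2n}^TS\,]$ is unitary-symplectic, $\widehat{S}^HA\widehat{S}$ is seen to be block upper-triangular, and the punchline ``normal $+$ upper-triangular $\Rightarrow$ diagonal'' finishes the argument in one stroke.

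By contrast, you rebuild $V$ from scratch out of the eigenspace decomposition of $A$, invoking the symplectic-diagonalizability hypothesis only indirectly through Theorem~\ref{thm:diag_sympl0}(2) to balance the $\pm i$-eigenspaces of $J_{2n}^T|_{E_\mu}$ on each purely imaginary eigenspace. Your approach is more explicit about \emph{where} the Lagrangian subspace comes from and makes transparent the role of the inertia condition, whereas the paper's approach is shorter and avoids any case distinction between purely imaginary and other eigenvalues: it never needs to analyze $J_{2n}^T|_{E_\mu}$ at all, because the Lagrangian property is inherited wholesale from $T_1$ rather than assembled eigenvalue by eigenvalue.
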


\begin{proof}
$ \Rightarrow$ Let $A \in \textnormal{M}_{2n \times 2n}(\CC)$ be normal Hamiltonian and symplectic diagonalizable via $T  \in \textnormal{M}_{2n \times 2n}(\CC)$, i.e.
$$ T^{-1}AT = \begin{bmatrix} D & \\ & -D^H \end{bmatrix}, \quad T^HJ_{2n}T = J_{2n}, \quad D = \textnormal{diag}(\lambda_1, \ldots , \lambda_n).$$
Let $ T= [ \, T_1 \quad T_2 \, ]$ with $T_1 = [ \, t_1 \; \cdots \; t_n \, ], T_2 \in \MM_{2n \times n}(\CC)$. The fact that $T^HJ_{2n}T = J_{2n}$ holds reveals that $\textnormal{span}(t_1, \ldots , t_n)$ is a Lagrangian subspaces (as is $\textnormal{span}(T_2)$). Due to the normality of $A$, eigenspaces for different eigenvalues of $A$ are orthogonal to each other. Whenever any $\lambda_j$ appears $r$ times in $D$ (in positions $j_1, \ldots , j_r$, say), we orthogonalize and normalize the corresponding eigenvectors $t_{j_1}, \ldots , t_{j_r}$ from $T_1$ obtaining $s_{j_1}, \ldots , s_{j_r}$. In particular, whenever $\lambda_k$ appears only once in $D$ (in position $k$), the sole eigenvector $t_k$ is replaced by its normalized version $ s_k = t_k/\Vert t_k \Vert_2$. The $n$ vectors obtained from this orthogonalization procedure are collected in a matrix $S \in \MM_{2n \times n}(\CC)$, that is, $S = [ \, s_1 \; \cdots \; s_n \, ]$, and we set $\widehat{S} = [ \, S \quad J_{2n}^T S \, ] \in \textnormal{M}_{2n \times 2n}(\CC)$. Now $s_1, \ldots , s_n$ are $n$ orthonormal eigenvectors of $A$ with $\textnormal{span}(T_1) = \textnormal{span}(S)$, i.e. $\textnormal{span}(S)$ is still Lagrangian. According to Proposition \ref{prop:unitarysymplectic} $\widehat{S}$ is unitary-symplectic. Moreover,
\begin{equation} \widehat{A} := \widehat{S}^H A \widehat{S} = \begin{bmatrix} S^H \\ S^H J_{2n} \end{bmatrix} A \begin{bmatrix} S & J_{2n}^T S \end{bmatrix} = \begin{bmatrix} S^HAS & S^HAJ_{2n}^TS \\ S^HJ_{2n}AS & S^HJ_{2n}AJ_{2n}S \end{bmatrix}. \label{equ:diag1} \end{equation}
As $AS = SD$ holds (following from $AT_1 = T_1D$ and the construction of $S$), we have $S^HAS = D$ in \eqref{equ:diag1} using the fact that $S^HS = I_n$. Moreover, $S^HJ_{2n}AS = S^HJ_{2n}SD = 0$ holds since $\textnormal{im}(S)$ is a Lagrangian subspace, i.e. $S^HJ_{2n}S = 0$. As $\widehat{S}$ is symplectic, $\widehat{A}$ remains to be Hamiltonian. This implies $S^HJ_{2n}AJ_{2n}S$  in \eqref{equ:diag1} to be equal to $-D^H$. Therefore, we showed that $\widehat{A}$ is actually upper-triangular. However, since $\widehat{S}$ is unitary, the normality of $A$ is preserved in $\widehat{A}$. As a normal upper-triangular matrix must be diagonal, $\widehat{S}^H A \widehat{S}$ is a unitary-symplectic diagonalization of $A$.

$\Leftarrow$ This is clear.
\end{proof}

The next Theorem \ref{thm:master1} states a special property of normal Hamiltonian matrices $A \in \MM_{2n \times 2n}(\CC)$ which are symplectic diagonalizable. In this case, the unitary-symplectic diagonalizability according to Theorem \ref{thm:unitsympdiag} reveals the existence of a specially structured additive decomposition of $A$ similar to the one from \eqref{equ:BasicDecomp}. As will be shown next, this decomposition is actually equivalent to $A$ being symplectic diagonalizable. Theorem \ref{thm:master1} is the main result of this section.

\begin{theorem} \label{thm:master1}
A matrix $A \in \textnormal{M}_{2n \times 2n}(\CC)$ is normal Hamiltonian and symplectic diagonalizable if and only if $A = N - N^\star$ for some normal matrix $N \in \textnormal{M}_{2n \times 2n}(\CC)$ satisfying $NN^\star = N^\star N = 0$.
\end{theorem}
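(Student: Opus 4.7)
The plan is to handle the two implications separately, using Theorem \ref{thm:unitsympdiag} as the bridge between ``symplectic diagonalizable'' and ``unitary-symplectic diagonalizable'', and Corollary \ref{cor:contained} to produce the required Lagrangian data in the backward direction.

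For the ``$\Rightarrow$'' direction, I would invoke Theorem \ref{thm:unitsympdiag} to upgrade the given symplectic diagonalization to a unitary-symplectic one, producing $Q = [\,V\ J_{2n}^TV\,]$ with $V^HV = I_n$, $V^HJ_{2n}V = 0$, and $Q^HAQ = D \oplus (-D^H)$ for some diagonal $D$. Expanding $A = Q\bigl(D \oplus (-D^H)\bigr)Q^H$ suggests the choice $N := VDV^H$. Normality of $N$ is immediate from $V^HV = I_n$. A short calculation with the identity $N^\star = -J_{2n}N^HJ_{2n}$ together with $(J_{2n}^TV)^H = V^HJ_{2n}$ identifies the second summand in the expansion as $-N^\star$, so $A = N - N^\star$. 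The identities $NN^\star = N^\star N = 0$ then both reduce to the Lagrangian relation $V^HJ_{2n}V = 0$.

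The ``$\Leftarrow$'' direction requires establishing three properties of $A$: Hamiltonian, normal, and symplectic diagonalizable. Hamiltonian is a one-liner: $A^\star = (N - N^\star)^\star = N^\star - N = -A$. For normality, I would first observe that $N^\star$ is itself normal (from $N$ normal and $N^\star = -J_{2n}N^HJ_{2n}$), then show that all four cross-products $N(N^\star)^H$, $N^\star N^H$, $N^HN^\star$, $(N^\star)^HN$ vanish: each follows from the hypotheses $NN^\star = N^\star N = 0$ combined with the facts that, for any normal $M$, $\textnormal{null}(M) = \textnormal{null}(M^H)$ and $\textnormal{im}(M) = \textnormal{im}(M^H)$. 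Expanding then gives $AA^H = NN^H + N^\star(N^\star)^H$ and $A^HA = N^HN + (N^\star)^HN^\star$, which coincide by normality of $N$ and of $N^\star$.

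The substantial remaining step, which I expect to be the main obstacle, is producing a unitary-symplectic $Q$ that diagonalizes $A$; Theorem \ref{thm:unitsympdiag} will then deliver symplectic diagonalizability. I would write $N = U_1\Lambda_1U_1^H$ for the compact normal factorization, with $U_1 \in \MM_{2n\times r}(\CC)$ orthonormal and $\Lambda_1$ diagonal invertible of size $r = \textnormal{rank}(N)$. Substituting into $N^\star N = 0$ and cancelling invertible factors forces $U_1^HJ_{2n}U_1 = 0$, so $\textnormal{im}(U_1) = \textnormal{im}(N)$ is a neutral subspace of dimension $r \leq n$ (Proposition \ref{prop:maxdimisotrop}). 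By Corollary \ref{cor:contained} it sits inside some Lagrangian subspace $L$, and I extend $U_1$ by Gram--Schmidt inside $L$ to an orthonormal basis $V = [\,U_1\ U_0'\,]$ of $L$; then $V^HV = I_n$ and $V^HJ_{2n}V = 0$, so $Q := [\,V\ J_{2n}^TV\,]$ is unitary-symplectic by Proposition \ref{prop:unitarysymplectic}. To finish, I compute $AV$: orthogonality $U_1^HU_0' = 0$ gives $NV = V(\Lambda_1 \oplus 0_{n-r})$, and the Lagrangian condition $U_1^HJ_{2n}V = 0$ forces $N^\star V = 0$, so $AV = V\widetilde{\Lambda}$ with $\widetilde{\Lambda} := \Lambda_1 \oplus 0_{n-r}$. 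Hence $Q^HAQ$ is block upper triangular. Since $Q$ is symplectic and $A$ is Hamiltonian, $Q^HAQ$ is Hamiltonian, which pins the $(2,2)$-block as $-\widetilde{\Lambda}^H$; since $Q$ is unitary and $A$ is normal, $Q^HAQ$ is normal. A normal upper-triangular matrix must be diagonal, completing the proof.
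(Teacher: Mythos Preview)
Your proposal is correct and follows essentially the same route as the paper's proof: the forward direction invokes Theorem~\ref{thm:unitsympdiag} and sets $N=VDV^H$, and the backward direction verifies the Hamiltonian and normal properties directly, then builds a unitary-symplectic diagonalizer by extending an orthonormal basis of the neutral subspace $\textnormal{im}(N)$ to a Lagrangian one via Corollary~\ref{cor:contained}, finishing with the ``normal upper-triangular is diagonal'' trick. The only differences are cosmetic: you use the compact rank-$r$ factorization of $N$ and extend, whereas the paper pads $D$ with zeros to size $n$ and then replaces the irrelevant columns of $V$; and for normality of $A$ you argue that all four cross-terms $N(N^\star)^H$, $N^\star N^H$, $N^HN^\star$, $(N^\star)^HN$ vanish (via $\textnormal{null}(M)=\textnormal{null}(M^H)$ and $\textnormal{im}(M)=\textnormal{im}(M^H)$ for normal $M$), while the paper instead cites a Fuglede-type commutation result from \cite{Grone87} to match the cross-terms without showing they are zero.
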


\begin{proof}
$\Rightarrow$ Let $A \in \MM_{2n \times 2n}(\CC)$ be normal Hamiltonian and assume that $A$ is symplectic diagonalizable. According to Theorem \ref{thm:unitsympdiag}, there exists a unitary-syplectic diagonalization $U^HAU = \widetilde{D}$ of $A$ and, by Proposition \ref{prop:unitarysymplectic}, $U = [ \, V \; J_{2n}^HV \, ]$ for some matrix $V \in \MM_{2n \times n}(\CC)$ with $V^HV = I_n$ and $V^HJ_{2n}V = 0$. Moreover, $\widetilde{D}$ has the form given in \eqref{equ:diagHamiltonian} for some matrix $D = \textnormal{diag}(\, \lambda_1, \ldots , \lambda_n \, ) \in \MM_{n \times n}(\CC)$. Then
\begin{align}
A = U\widetilde{D}U^H &= \begin{bmatrix} V & J_{2n}^HV \end{bmatrix}  \begin{bmatrix} D & 0 \\ 0 & - D^H \end{bmatrix}  \begin{bmatrix} V^H \\ V^HJ_{2n} \end{bmatrix} \label{equ:HamilProof0} \\
          &= V D V^H - J_{2n}^HV D^H V^HJ_{2n} = N - N^\star \notag
\end{align}
for $N := V D V^H$.
Moreover, $N$ is normal as $NN^H = V D V^H V D^H V^H = V D D^H V^H$ coincides with $N^HN = V D^H D V^H$ since $D$ is diagonal. Furthermore, we have
$$ NN^\star = V D V^H \big( J_{2n}^H V D^H V^H  J_{2n} \big) = - V D \big( V^H  J_{2n} V \big) D^H V^H J_{2n} = 0 \label{equ:HamilProof1} $$
as $V^H J_{2n}V = 0$.
Similarly it can be seen that $N^\star N = 0$ holds.

$\Leftarrow$ Now assume that $A = N-N^\star$ holds for some normal matrix $N \in \MM_{2n \times 2n}(\CC)$ with $NN^\star = N^\star N = 0_{2n \times 2n}$. Then $A$ is Hamiltonian since $A^\star = (N - N^\star)^\star = N^\star - N = - (N - N^\star) = -A$.
Furthermore, we have
\begin{equation} \begin{aligned}
A^HA &= (N - N^\star)^H(N - N^\star) = N^HN - (N^\star)^HN - N^HN^\star + (N^\star)^HN^\star  \\
AA^H &=  (N - N^\star)(N - N^\star)^H = NN^H - N^\star N^H - N(N^\star)^H + N^\star ( N^\star)^H.
\end{aligned} \label{equ:normalA} \end{equation}
Recall that the normality of $N^\star$ follows directly from the normality of $N$. With this observation, the assumption $NN^\star = N^\star N$ and the normality of $N^\star$ imply $N(N^\star)^H = (N^\star)^HN$ according to \cite[Sec.~2(6)]{Grone87}. Similarly, we obtain $N^\star N^H = N^H N^\star$ and both expressions in \eqref{equ:normalA} coincide. Thus, $A$ is normal. \\
\indent Moreover, as $N^\star N = J_{2n}^TN^HJ_{2n}N = 0$, multiplication from the left with $J_{2n}$ yields $N^HJ_{2n}N = 0$, so the columns of $N$ span an neutral subspace. Furthermore, $N^\star N=0$ implies $\textnormal{im}(N) \subseteq \textnormal{null}(N^\star)$ which yields $\textnormal{rank}(N) \leq n$ since\footnote{Alternatively, $\textnormal{rank}(N) \leq n$ follows from the isotropy of $\textnormal{im}(N)$ using the result from Proposition \ref{prop:maxdimisotrop}.}
$$\textnormal{rank}(N) = \dim(\textnormal{im}(N)) \leq \dim(\textnormal{null}(N^\star)) = 2n - \textnormal{rank}(N^\star) = 2n - \textnormal{rank}(N).$$
Now, the normality of $N$ and $\textnormal{rank}(N) \leq n$ imply that there exists a diagonal matrix $D \in \MM_{n \times n}(\CC)$, $\textnormal{rank}(D) = \textnormal{rank}(N)$, and a matrix $V \in \MM_{2n \times n}(\CC)$ with orthonormal columns (i.e. $V^HV = I_n$) so that $N = V DV^H$. If $\textnormal{rank}(N)=k < n$, then $D$ has $n-k$ eigenvalues equal to zero. Without loss of generality, we assume that these zeros appear in the trailing $n-k$ diagonal positions in $D$. The expression of $N$ implies $N^\star = J_{2n}^TN^HJ_{2n} = J_{2n}^TV D^H V^H J_{2n}$. Therefore, $A$ can be expressed as
\begin{equation}  A = N - N^\star = V DV^H - J_{2n}^TV D^H V^H J_{2n}. \label{equ:V} \end{equation}
With $\widetilde{D} := \left[ \begin{smallmatrix} D & \\ &   -D^H  \end{smallmatrix} \right] $ and $U := [ \, V \quad J_{2n}^T V \, ]$ we observe in accordance with \eqref{equ:V} that
\begin{equation} U\widetilde{D}U^H = \begin{bmatrix}  V & J_{2n}^TV  \end{bmatrix} \begin{bmatrix} D & \\ & -D^H \end{bmatrix} \begin{bmatrix} V^H \\  V^H J_{2n} \end{bmatrix} = V DV^H - J_{2n}^TV D^H V^H J_{2n} = A. \label{equ:decofA} \end{equation}
Then, obviously, $U^HAU = \widetilde{D}$ is diagonal. Unfortunately, as long as $V^HJ_{2n}V = 0$ does not holds, $U$ will neither be unitary nor symplectic. However, if it can be shown that $\textnormal{im}(V)$ is in fact a Lagrangian subspace, Proposition \ref{prop:unitarysymplectic} applies and the theorem is proven. We distinguish between the two cases $\textnormal{rank}(N) = n$ and $\textnormal{rank}(N) = k < n$. \\
\indent First assume that $\textnormal{rank}(N)=n$, i.e. $\dim( \textnormal{im}(N))=n$. Then we have $\textnormal{rank}(D)=n$ and therefore $\textnormal{im}(N) = \textnormal{im}(V)$ is a Lagrangian subspace. As $V^HV = I_n$ holds, Proposition \ref{prop:unitarysymplectic} yields that $U$ is unitary-symplectic and $U^HAU=\widetilde{D}$ is a unitary-symplectic diagonalization of $A$.
Now let $\textnormal{rank}(N) =k < n$. Recall that we assumed the $n-k$ eigenvalues of $D$ which are equal to zero to appear in its trailing $n-k$ diagonal positions. Then, if $V = [ \, v_1 \; \cdots \; v_n \, ]$ it is immediate that $\textnormal{im}(N)$ coincides with the $\textnormal{span}(v_1, \ldots , v_k)$. In other words, the last $n-k$ columns $v_{k+1}, \ldots , v_n$ of $V$ have no contribution to the matrices $N$, $N^\star$ or $A$ at all. Therefore, as long as the orthogonality constraint is met, $v_{k+1}, \ldots , v_n$ can be replaced by any other columns without changing the expression of $A$ in \eqref{equ:decofA}.
Now we take Corollary \ref{cor:contained} into account. As $\textnormal{span}(v_1, \ldots , v_k) = \textnormal{im}(N)$ is an neutral subspace (of dimension $k$), it is properly contained in a Lagrangian subspace. Therefore, there exist $n-k$ vectors $\widetilde{v}_{k+1}, \ldots , \widetilde{v}_n \in \CC^{2n}$ such that $\textnormal{span}(v_1, \ldots , v_k, \widetilde{v}_{k+1}, \ldots , \widetilde{v}_n)$ is a Lagrangian subspace. If $\widetilde{v}_{k+1}, \ldots , \widetilde{v}_n$ are chosen so that $$\widetilde{V} := \big[ \, v_1 \; \cdots \; v_k \; \widetilde{v}_{k+1} \; \cdots \; \widetilde{v}_n \, \big] \in \MM_{2n \times n}(\CC)$$ has orthonormal columns, i.e. $\widetilde{V}^H \widetilde{V} = I_n$, we obtain
\begin{equation} \begin{bmatrix} \widetilde{V} & J_{2n}^T\widetilde{V}  \end{bmatrix} \begin{bmatrix} D & \\ & -D^H \end{bmatrix} \begin{bmatrix} \widetilde{V}^H \\ \widetilde{V}^H J_{2n} \end{bmatrix} = \widetilde{V} D\widetilde{V}^H - J_{2n}^T \widetilde{V} D^H \widetilde{V}^H J_{2n} = A. \label{equ:decofA1} \end{equation}
Now the matrix $\widetilde{U} := [ \, \widetilde{V} \quad J_{2n}^T\widetilde{V} \, ] \in \MM_{2n \times 2n}(\CC)$ is unitary-symplectic according to Proposition \ref{prop:unitarysymplectic} and $\widetilde{U}^HA\widetilde{U} = \widetilde{D}$ is a unitary-symplectic diagonalization of $A$.
\end{proof}

\begin{remark}
Consider once again the proof of Theorem \ref{thm:master1} and a decomposition $A = N - N^\star$ for some normal Hamiltonian matrix $A \in \textnormal{M}_{2n \times 2n}(\CC)$ with $NN^\star = N^\star N = 0$ and normal $N \in \textnormal{M}_{2n \times 2n}(\CC)$.
\begin{enumerate}
\item For the matrix $N \in \textnormal{M}_{2n \times 2n}(\CC)$ it always holds that $\textnormal{rank}(N) = \textnormal{rank}(N^\star)  \leq n$ and $\textnormal{im}(N)$, $\textnormal{im}(N^\star)$ are neutral subspaces. Moreover, as $AN = (N-N^\star)N = N^2$ and $AN^\star = (N^\star)^2$, both $\textnormal{im}(N)$ and $\textnormal{im}(N^\star)$ are invariant for $A$. In conclusion, $\textnormal{im}(N)$ and $\textnormal{im}(N^\star)$ are invariant Lagrangian subspaces for $A$ if $\textnormal{rank}(N) = \textnormal{rank}(N^\star)=n$.
\item If $(\lambda,v)$ is an eigenpair of $N$, i.e. $Nv = \lambda v$, and $\lambda \neq 0$, then $$Av = \frac{1}{\lambda}(N-N^\star)(\lambda v) = \frac{1}{\lambda} (N-N^\star)Nv = \frac{1}{\lambda} N^2 v = \lambda v,$$
so $\lambda$ is an eigenvalue of $A$ with eigenvector $v$. In particular, we have $\sigma(N) \setminus \lbrace 0 \rbrace \subseteq \sigma(A)$. Similarly it can be shown that $\sigma(N^\star) \setminus \lbrace 0 \rbrace \subseteq \sigma(A)$. In conclusion, whenever $\textnormal{rank}(N) = \textnormal{rank}(N^\star)=n$, the matrix $A$ is nonsingular (i.e. $0 \notin \sigma(A)$) and it holds that $(\sigma(N) \cup \sigma(N^\star)) \setminus \lbrace 0 \rbrace = \sigma(A)$.
\end{enumerate}
\end{remark}

The additive decomposition $A = N - N^\star \in \MM_{2n \times 2n}(\CC)$ (for $N$ being normal with $NN^\star = N^\star N = 0$) proven in Theorem \ref{thm:master1} can be used to easily derive some nice consequences whenever not $A$ itself but some \textit{expression in} $A$ is considered. One such situation is given by considering the exponential of $A$ \cite[Sec.\,10]{High08}. Recall that the exponential of a Hamiltonian matrix yields a symplectic matrix \cite[Sec.\,7.2]{Still08}.

\begin{example} \label{ex:exp1}
Let $A =N - N^\star \in \textnormal{M}_{2n \times 2n}(\CC)$ be normal Hamiltonian with some normal $N \in \textnormal{M}_{2n \times 2n}(\CC)$ satisfying $NN^\star = N^\star N = 0$. Considering the exponential $\exp(A)$ of $A$ we obtain
\begin{align*} \exp(A) &= \exp(N - N^\star) = \exp(N)\exp(-N^\star) = \exp(N) \exp(N^\star)^{-1} \\ &= \exp(N) \exp(J_{2n}^TN^HJ_{2n})^{-1} = \exp(N) \big( J_{2n}^T \exp(N^H) J_{2n} \big)^{-1} \\ &= \exp(N) \big( J_{2n}^T \exp(N)^H J_{2n} \big)^{-1} = \exp(N) \big( \exp(N)^\star \big)^{-1}
\end{align*}
where we have used the facts that $\exp(-N^\star) = \exp(N^\star)^{-1}$ and $$\exp(J_{2n}^{-1}N^HJ_{2n}) = J_{2n}^{-1} \exp(N)^H J_{2n},$$ cf. \cite{High08}.
Notice that the exponential of a normal matrix remains to be normal. Therefore, the symplectic and normal matrix $\exp(A)$ can be decomposed as $S(S^\star)^{-1} = SS^{- \star}$ for some normal matrix $S \in \MM_{2n \times 2n}(\CC)$. If $A = N+N^\star$ is skew-Hamiltonian with $N$ normal and $NN^\star = N^\star N = 0$, the same derivation shows that $\exp(A) = SS^\star$ (for $S = \exp(N)$) revealing nicely the maintained skew-Hamiltonian structure. Certainly, $\exp(A)$ is again normal.
\end{example}

Theorem \ref{thm:master1} directly extends to normal skew-Hamiltonian matrices which are unitary-symplectic diagonalizable. To this end, notice that a diagonal skew-Hamiltonian matrix $\widetilde{D} \in \textnormal{M}_{2n \times 2n}(\CC)$ has the form given in \eqref{equ:diagHamiltonian}.
Thus, the only significant difference comparing the proofs of Theorem \ref{thm:master2} and Theorem \ref{thm:master1} above is a change of sign. Consequently, the proof of  Theorem \ref{thm:master2} is omitted.

\begin{theorem} \label{thm:master2}
A matrix $A \in \textnormal{M}_{2n \times 2n}(\CC)$ is normal skew-Hamiltonian and symplectic diagonalizable if and only if $A = N + N^\star$ for some normal matrix $N \in \textnormal{M}_{2n \times 2n}(\CC)$ satisfying $NN^\star = N^\star N = 0$.
\end{theorem}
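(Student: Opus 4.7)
The plan is to follow the proof of Theorem \ref{thm:master1} essentially verbatim, with the only substantive change being a single sign flip: since a diagonal skew-Hamiltonian matrix has the form $\widetilde{D} = D \oplus D^H$ (cf.\ \eqref{equ:diagHamiltonian}) rather than $D \oplus (-D^H)$, the term $-N^\star$ in every formula is replaced by $+N^\star$, and accordingly the skew-adjointness condition $A^\star = -A$ is replaced by the selfadjointness condition $A^\star = A$, i.e.\ the skew-Hamiltonian condition.

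For the forward direction I would first apply Theorem \ref{thm:unitsympdiag} to obtain a unitary-symplectic $U$ with $U^H A U = \widetilde{D}$, and then write $U = [\, V \; J_{2n}^T V \,]$ with $V^H V = I_n$ and $V^H J_{2n} V = 0$ using Proposition \ref{prop:unitarysymplectic}. Expanding $A = U \widetilde{D} U^H$ yields
$$ A \;=\; V D V^H + J_{2n}^T V D^H V^H J_{2n} \;=\; N + N^\star, \qquad N := V D V^H, $$
and the remaining verifications---normality of $N$ from the diagonality of $D$, together with the vanishing of $N N^\star$ and $N^\star N$ from $V^H J_{2n} V = 0$---are identical to the ones carried out in the proof of Theorem \ref{thm:master1}.

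For the converse I would first note that $A^\star = (N + N^\star)^\star = N^\star + N = A$, so $A$ is skew-Hamiltonian, and that $A$ is normal by the same cross-term calculation as in Theorem \ref{thm:master1}, using the normality of $N$ (and hence of $N^\star$) together with the commutation fact from \cite[Sec.~2(6)]{Grone87}. Next, $N^\star N = 0$ gives $N^H J_{2n} N = 0$, so $\textnormal{im}(N)$ is a neutral subspace, and a dimension count (alternatively, Proposition \ref{prop:maxdimisotrop}) yields $\textnormal{rank}(N) \leq n$. Writing the normal $N$ in spectral form $N = V D V^H$ with $V^H V = I_n$ and gathering any zero eigenvalues of $D$ in its trailing diagonal positions produces the candidate decomposition $A = U \widetilde{D} U^H$ with $U := [\, V \; J_{2n}^T V \,]$ and $\widetilde{D} := D \oplus D^H$.

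The main obstacle, exactly as in Theorem \ref{thm:master1}, is to arrange that $\textnormal{im}(V)$ is Lagrangian, so that Proposition \ref{prop:unitarysymplectic} certifies $U$ as unitary-symplectic. This is automatic when $\textnormal{rank}(N) = n$. When $\textnormal{rank}(N) = k < n$, I would invoke Corollary \ref{cor:contained} to extend the orthonormal basis of the $k$-dimensional neutral subspace $\textnormal{im}(N)$ to an orthonormal basis of a Lagrangian subspace, and then replace the trailing $n-k$ columns of $V$---which multiply the zero eigenvalues of $D$ and thus contribute nothing to $A = V D V^H + J_{2n}^T V D^H V^H J_{2n}$---with the extension vectors. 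Proposition \ref{prop:unitarysymplectic} then yields a genuine unitary-symplectic diagonalization of $A$, and in particular $A$ is symplectic diagonalizable.
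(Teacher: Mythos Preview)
Your proposal is correct and matches the paper's own approach exactly: the paper omits the proof of Theorem~\ref{thm:master2} entirely, stating only that ``the only significant difference comparing the proofs of Theorem~\ref{thm:master2} and Theorem~\ref{thm:master1} above is a change of sign,'' and your write-up carries out precisely this sign-flipped argument in full detail.
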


As the next example shows, the special additive decomposability of a normal skew-Hamiltonian matrix $A = N + N^\star \in \MM_{2n \times 2n}(\CC)$ (with $N$ normal and $NN^\star = N^\star N = 0$) carries over to other matrix functions as, e.g., matrix roots. In particular, a matrix root of $A$ can be expressed by an analogous decomposition as $A$ replacing $N$ by its matrix root.

\begin{example} \label{ex:roots}
Let $A =N + N^\star \in \textnormal{M}_{2n \times 2n}(\CC)$ be nonsingular, normal and skew-Hamiltonian with $N = V DV^H$ as in \eqref{equ:HamilProof0} and $D = \textnormal{diag}(\lambda_1, \ldots , \lambda_n)$ such that
\begin{align*}
A = U\widetilde{D}U^H &= \begin{bmatrix} V & J_{2n}^HV \end{bmatrix}  \begin{bmatrix} D & 0 \\ 0 & D^H \end{bmatrix}  \begin{bmatrix} V^H \\ V^HJ_{2n} \end{bmatrix} \label{equ:HamilProof0} \\
          &= V D V^H + J_{2n}^HV D^H V^HJ_{2n} = N + N^\star. \notag
\end{align*}
Define $D^{1/2} = \textnormal{diag}(\lambda_1^{1/2}, \ldots , \lambda_n^{1/2})$ and $N^{1/2} := V D^{1/2}V^H$ (where $^{1/2}$ denotes any square root). Then $N^{1/2}$ is a square root of $N$, that is, $(N^{1/2})^2 = N$. Moreover, $((N^{1/2})^\star)^2 = N^\star$ can be verified by a direct calculation and it still holds that $N^{1/2} (N^{1/2})^\star = (N^{1/2})^\star N^{1/2} = 0$ due to the construction of $N^{1/2}$.  Therefore we obtain
\begin{align*}  \big( N^{1/2} + (N^{1/2})^\star \big) \big( N^{1/2} + (N^{1/2})^\star \big) &= (N^{1/2})^2 + N^{1/2}(N^{1/2})^\star + (N^{1/2})^\star N^{1/2} + \big( (N^{1/2})^\star \big)^2 \\ &= N + N^\star = A \end{align*}
    and $N^{1/2} + (N^{1/2})^\star$ is a normal skew-Hamiltonian square root of $A$ which is, by Theorem \ref{thm:master2}, again symplectic diagonalizable.  Certainly, this result can be generalized to arbitrary matrix $p$th roots for any $p \in \mathbb{N}$.
\end{example}

\subsection{Normal per(skew)-Hermitian Matrices and Perplectic \\ Diagonalizability}
\label{sec:norm_perp}

Now we turn our attention to normal matrices  which are per-Hermitian or perskew-Hermitian and analyze their properties with respect to unitary and perplectic diagonalization. The main statements are similar to the previous results from Section \ref{sec:norm_symp} although the indefinite inner product $[x,y]=x^HR_{2n}y$ on $\CC^{2n} \times \CC^{2n}$ under consideration is now Hermitian instead of skew-Hermitian. We begin with the characterization of matrices which are both unitary and perplectic in Proposition \ref{prop:unitaryperplectic} (we use the abbreviation \textit{unitary-perplectic} for these matrices). The statement analogous to Theorem \ref{thm:unitsympdiag} on unitary-perplectic diagonalizability is presented in Theorem \ref{thm:unitperpdiag} whereas the analogous results to Theorem \ref{thm:master1} and \ref{thm:master2} are given in Theorem \ref{thm:master3}.

\begin{proposition} \label{prop:unitaryperplectic}
A matrix $Q \in \MM_{2n \times 2n}(\CC)$ is unitary-perplectic if and only if $Q = [ \, V \; R_{2n}VR_n \, ]$ for some matrix $V \in \textnormal{M}_{2n \times n}(\CC)$ with $V^HV = I_n$ and $V^HR_{2n}V = 0$.
\end{proposition}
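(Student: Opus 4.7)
The strategy parallels the proof of Proposition \ref{prop:unitarysymplectic}, but with the Hermitian matrix $R_{2n}$ in place of the skew-Hermitian $J_{2n}$. The extra factor $R_n$ appearing on the right of $V$ in the block form is the new feature one has to track; it arises from the block structure $R_{2n} = \left[\begin{smallmatrix} 0 & R_n \\ R_n & 0 \end{smallmatrix}\right]$, which mixes the two halves of a matrix by $R_n$ rather than by $\pm I_n$ as $J_{2n}$ does.

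For the forward direction, let $Q = [\,Q_1 \;\; Q_2\,]$ be unitary-perplectic with $Q_1, Q_2 \in \MM_{2n \times n}(\CC)$. Unitarity gives $Q^{-H} = Q$, so the perplectic relation $Q^H R_{2n} Q = R_{2n}$ can be rewritten as $R_{2n} Q = Q R_{2n}$; that is, $Q$ commutes with $R_{2n}$. Expanding the product on both sides in block form,
$$ R_{2n}[\,Q_1 \;\; Q_2\,] = [\,R_{2n}Q_1 \;\; R_{2n}Q_2\,], \qquad [\,Q_1 \;\; Q_2\,]R_{2n} = [\,Q_2 R_n \;\; Q_1 R_n\,],$$
and comparing the first blocks yields $Q_2 = R_{2n}Q_1 R_n^{-1} = R_{2n}Q_1 R_n$, since $R_n^{2}=I_n$. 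Setting $V := Q_1$ this reads $Q = [\,V \;\; R_{2n}VR_n\,]$. The $(1,1)$-block of $Q^HQ=I_{2n}$ yields $V^HV=I_n$, while the $(1,1)$-block of $Q^H R_{2n}Q = R_{2n}$ yields $V^HR_{2n}V = 0$.

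For the converse, suppose $V \in \MM_{2n \times n}(\CC)$ satisfies $V^HV=I_n$ and $V^HR_{2n}V=0$, and set $Q = [\,V \;\; R_{2n}VR_n\,]$. Using $R_{2n}^H = R_{2n}$, $R_{2n}^2=I_{2n}$ and $R_n^2 = I_n$, a direct block computation shows
$$ Q^H Q = \begin{bmatrix} V^HV & V^H R_{2n} V R_n \\ R_n V^H R_{2n} V & R_n V^H V R_n \end{bmatrix} = \begin{bmatrix} I_n & 0 \\ 0 & I_n \end{bmatrix},$$
so $Q$ is unitary. Similarly,
$$ Q^H R_{2n} Q = \begin{bmatrix} V^H R_{2n} V & V^H V R_n \\ R_n V^H V & R_n V^H R_{2n} V R_n \end{bmatrix} = \begin{bmatrix} 0 & R_n \\ R_n & 0 \end{bmatrix} = R_{2n},$$
so $Q$ is perplectic.

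The only real obstacle is keeping the block algebra straight: one must be careful to note that the right multiplication by $R_n$ in the definition of $Q_2$ (absent in the symplectic analogue) is exactly what is needed for $R_{2n}$ to commute with $Q$, and it is also what makes the $(1,2)$- and $(2,1)$-blocks in $Q^HR_{2n}Q$ come out to $R_n$ rather than $\pm I_n$. All other manipulations are routine once this is observed.
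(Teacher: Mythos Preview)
Your proof is correct and follows essentially the same approach as the paper: both derive the commutation relation $R_{2n}Q = QR_{2n}$ from unitarity and perplecticity and then read off the block form $Q_2 = R_{2n}Q_1R_n$, followed by the block verification of $Q^HR_{2n}Q = R_{2n}$ in the converse. Your version is in fact slightly more complete, since you explicitly extract the conditions $V^HV = I_n$ and $V^HR_{2n}V = 0$ in the forward direction and verify the unitarity $Q^HQ = I_{2n}$ in the converse, both of which the paper leaves implicit.
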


\begin{proof}
Let $Q  = [ \, Q_1 \quad Q_2 \, ]$ be unitary-perplectic with $Q_1, Q_2 \in \textnormal{M}_{2n \times n}(\CC).$ As $Q$ is unitary we have $Q^HQ=I_{2n}$ and as it is perplectic $Q^HR_{2n}Q = R_{2n}$ holds. Multiplying the latter with $Q$ from the left gives $R_{2n}Q = QR_{2n}$, so $Q$ commutes with $R_{2n}$. Matrices satisfying this condition are known as centrosymmetric \cite[Def.\,2.2]{Abu02}. %\cite{Reid97}.
It is easy to see that any centrosymmetric matrix $C \in \MM_{2n \times 2n}(\CC)$ is symmetric with respect to the center of it and thus can be expressed as $C = [ \, W \quad R_{2n}WR_n \, ]$ for some $W \in \textnormal{M}_{2n \times n}(\CC).$ Moreover, any matrix of the form of $C$ is centrosymmetric for any $W$. Now let $Q = [ \, V \quad R_{2n}VR_n \, ]$ with $V^HV = I_n$ and $V^HR_{2n}V = 0$ be given. Then we have
$$ \begin{aligned} \begin{bmatrix} V & R_{2n}VR_n \end{bmatrix}^HR_{2n} \begin{bmatrix} V & R_{2n}^TVR_n \end{bmatrix} &= \begin{bmatrix} V^H \\ R_nV^HR_{2n} \end{bmatrix} R_{2n} \begin{bmatrix} V & R_{2n} VR_n \end{bmatrix} \\ &= \begin{bmatrix} V^HR_{2n}V & V^HVR_n \\ R_nV^HV & R_n V^HR_{2n}VR_n \end{bmatrix} = \begin{bmatrix} & R_n \\ R_n & \end{bmatrix}. \end{aligned}$$
which gives $Q^HR_{2n}Q = R_{2n}$. This completes the proof.
\end{proof}

The analogous result to Theorem \ref{thm:unitsympdiag} is stated in the following proposition.

\begin{theorem} \label{thm:unitperpdiag}
A normal per(skew)-Hermitian matrix $A \in \textnormal{M}_{2n \times 2n}(\CC)$ is perplectic diagonalizable if and only if it is unitary-perplectic diagonalizable.
\end{theorem}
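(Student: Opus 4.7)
The $(\Leftarrow)$ direction is immediate, since any unitary-perplectic diagonalization is in particular a perplectic diagonalization. For $(\Rightarrow)$ the plan is to adapt the argument of Theorem \ref{thm:unitsympdiag} by replacing $J_{2n}$ with $R_{2n}$ and the skew-Hermitian structural identities by their Hermitian counterparts; the key facts that make this transfer possible are Proposition \ref{prop:unitaryperplectic}, Corollary \ref{cor:nondegeigen}, and the fact that for a normal matrix, eigenspaces corresponding to distinct eigenvalues are orthogonal.

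Assume first that $A$ is normal per-Hermitian and perplectic diagonalizable. Recall that a diagonal per-Hermitian matrix has the block form $\widetilde{D} = D \oplus R_n D^H R_n$ with $D = \textnormal{diag}(\lambda_1, \ldots, \lambda_n)$. So there is a perplectic $T = [\, T_1 \; T_2\,] \in \MM_{2n \times 2n}(\CC)$, $T_j \in \MM_{2n \times n}(\CC)$, with $T^{-1}AT = \widetilde{D}$ and $T^HR_{2n}T = R_{2n}$. The latter identity implies $T_1^HR_{2n}T_1 = 0$, so $\textnormal{im}(T_1)$ is a Lagrangian subspace. The first step is to replace the columns of $T_1$ by an orthonormal set $s_1, \ldots, s_n$ spanning the same space: for each distinct eigenvalue $\lambda$ appearing in $D$ in positions $j_1, \ldots, j_r$, I orthonormalize the columns $t_{j_1}, \ldots, t_{j_r}$ inside the eigenspace of $A$ for $\lambda$ (this preserves the span of those columns). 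Columns belonging to different eigenvalues are already orthogonal by normality of $A$. The resulting matrix $S = [\, s_1 \; \cdots \; s_n \,]$ therefore satisfies $S^HS = I_n$, $AS = SD$, and $\textnormal{im}(S) = \textnormal{im}(T_1)$ remains Lagrangian, so $S^HR_{2n}S = 0$.

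By Proposition \ref{prop:unitaryperplectic}, the matrix $\widehat{S} := [\, S \; R_{2n}SR_n \,]$ is then unitary-perplectic. The second step is to verify that $\widehat{A} := \widehat{S}^HA\widehat{S}$ is diagonal. A direct computation shows
\[
\widehat{A} = \begin{bmatrix} S^HAS & S^HAR_{2n}SR_n \\ R_nS^HR_{2n}AS & R_nS^HR_{2n}AR_{2n}SR_n \end{bmatrix}.
\]
The $(1,1)$ block equals $S^HSD = D$ and the $(2,1)$ block equals $R_nS^HR_{2n}SD = 0$ since $S^HR_{2n}S = 0$. Because $\widehat{S}$ is perplectic, $\widehat{A}$ remains per-Hermitian; comparing $R_{2n}\widehat{A}^HR_{2n}$ with $\widehat{A}$ then forces the $(2,2)$ block to equal $R_nD^HR_n$, so $\widehat{A}$ is block upper triangular with diagonal blocks $D$ and $R_nD^HR_n$. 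Because $\widehat{S}$ is unitary, $\widehat{A}$ also remains normal; a direct check of $\widehat{A}\widehat{A}^H = \widehat{A}^H\widehat{A}$ on the $(1,1)$ block yields $XX^H = D^HD - DD^H = 0$ (where $X$ denotes the $(1,2)$ block), forcing $X = 0$. Hence $\widehat{A} = \widetilde{D}$ and $\widehat{S}^HA\widehat{S}$ is a unitary-perplectic diagonalization of $A$.

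For the perskew-Hermitian case I would simply observe that if $A$ is perskew-Hermitian and normal, then $iA$ is per-Hermitian and normal, has the same normality and perplectic-diagonalizability properties (since scaling by $i$ and perplectic similarity commute), and therefore admits a unitary-perplectic diagonalization by the case already treated; multiplying back by $-i$ yields a unitary-perplectic diagonalization of $A$. The main obstacle I anticipate is the bookkeeping in the second step: one must confirm that intra-eigenspace orthogonalization preserves the Lagrangian character of $\textnormal{im}(T_1)$, and one must correctly exploit both the per-Hermitian structure (to pin down the $(2,2)$ block) and the normality (to kill the $(1,2)$ block), echoing the two-tier argument used for the symplectic case.
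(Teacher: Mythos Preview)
Your proof is correct and follows essentially the same route as the paper: the paper explicitly omits the proof of Theorem~\ref{thm:unitperpdiag} and says it ``goes along the same lines as that of Theorem~\ref{thm:unitsympdiag},'' orthonormalizing the first block of columns of a perplectic diagonalizer within each eigenspace and then invoking Proposition~\ref{prop:unitaryperplectic}. Your block computation of $\widehat{A}$ and the reduction of the perskew-Hermitian case to the per-Hermitian one via multiplication by $i$ are exactly the perplectic analogues of the corresponding steps in the proof of Theorem~\ref{thm:unitsympdiag}; the mention of Corollary~\ref{cor:nondegeigen} in your opening paragraph is superfluous but harmless.
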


The proof of Theorem \ref{thm:unitperpdiag} goes along the same lines as that of Theorem \ref{thm:unitsympdiag} noting that, for any perplectic matrix $P =[ \, P_1 \quad P_2 \, ] \in \MM_{2n \times 2n}(\CC)$ with $P_1,P_2 \in \MM_{2n \times n}(\CC)$, $\textnormal{span}(P_1)$ and $\textnormal{span}(P_2)$ are Lagrangian subspaces. The same orthogonalization procedure of the eigenvectors of $A$ given by the columns of $P_1$ as discussed in the proof of Theorem \ref{thm:unitsympdiag} then admits the construction of a unitary-perplectic matrix (characterized by Proposition \ref{prop:unitaryperplectic}) which diagonalizes $A$.

Notice that a diagonal per(skew)-Hermitian matrix $\widetilde{D} \in \textnormal{M}_{2n \times 2n}(\CC)$ has the form
\begin{equation} \widetilde{D} = \begin{bmatrix} D & \\ & \pm R_n D^H R_n \end{bmatrix} \quad \textnormal{with} \; D = \textnormal{diag}(\lambda_1, \ldots , \lambda_n) \in \MM_{n \times n}(\CC). \label{equ:perherm_diag} \end{equation}
The characterization of unitary-perplectic matrices in Proposition \ref{prop:unitaryperplectic} together with \eqref{equ:perherm_diag} admit a proof analogous to that of Theorem \ref{thm:master1} for the following results.

\begin{theorem}  \label{thm:master3}
\begin{enumerate}
\item A matrix $A \in \textnormal{M}_{2n \times 2n}(\CC)$ is normal per-Hermitian and perplectic diagonalizable if and only if $A = N + N^\star$ for some normal matrix $N \in \textnormal{M}_{2n \times 2n}(\CC)$ with $NN^\star = N^\star N = 0$.
\item A matrix $A \in \textnormal{M}_{2n \times 2n}(\CC)$ is normal perskew-Hermitian and perplectic diagonalizable if and only if $A = N - N^\star$ for some normal matrix $N \in \textnormal{M}_{2n \times 2n}(\CC)$ satisfying $NN^\star = N^\star N = 0$.
\end{enumerate}
\end{theorem}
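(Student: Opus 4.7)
The plan is to imitate the proof of Theorem \ref{thm:master1} essentially verbatim, using Theorem \ref{thm:unitperpdiag} and Proposition \ref{prop:unitaryperplectic} as the perplectic substitutes for Theorem \ref{thm:unitsympdiag} and Proposition \ref{prop:unitarysymplectic}, and using the per(skew)-Hermitian diagonal form from \eqref{equ:perherm_diag} in place of the (skew)-Hamiltonian diagonal form. Since $R_{2n}^{-1} = R_{2n}$, the adjoint in this setting is $N^\star = R_{2n}N^HR_{2n}$. The sign conventions are such that part (1) (per-Hermitian, $A^\star = A$) corresponds to the skew-Hamiltonian case of Theorem \ref{thm:master2} with additive decomposition $A = N + N^\star$, while part (2) (perskew-Hermitian, $A^\star = -A$) corresponds to the Hamiltonian case of Theorem \ref{thm:master1} with additive decomposition $A = N - N^\star$.

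For the forward direction of part (1), I would assume $A$ is normal per-Hermitian and perplectic diagonalizable. By Theorem \ref{thm:unitperpdiag}, there is a unitary-perplectic $U$ with $U^HAU = \widetilde{D}$ where $\widetilde{D}$ has the block-diagonal form from \eqref{equ:perherm_diag} with the $+$ sign. Proposition \ref{prop:unitaryperplectic} lets me write $U = [\,V \;\; R_{2n}VR_n\,]$ with $V^HV = I_n$ and $V^HR_{2n}V = 0$. Substituting and using $R_n^2 = I_n$, a short computation yields
\begin{equation*} A = U\widetilde{D}U^H = VDV^H + R_{2n}VD^HV^HR_{2n} = N + N^\star \end{equation*}
with $N := VDV^H$. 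I would then verify that $N$ is normal (immediate since $D$ is diagonal and $V^HV = I_n$) and that $NN^\star = N^\star N = 0$ follows directly from $V^HR_{2n}V = 0$, exactly as in the symplectic proof.

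For the backward direction of part (1), I would assume $A = N + N^\star$ with $N$ normal and $NN^\star = N^\star N = 0$. The per-Hermitian property follows from $(N + N^\star)^\star = N^\star + N$. Normality of $A$ is established by expanding $A^HA$ and $AA^H$ and using the identity from \cite[Sec.~2(6)]{Grone87} that $N$ normal together with $NN^\star = N^\star N$ implies $N(N^\star)^H = (N^\star)^H N$, just as in the proof of Theorem \ref{thm:master1}. The condition $N^\star N = 0$ gives $N^H R_{2n} N = 0$, so $\textnormal{im}(N)$ is a neutral subspace and hence $\textnormal{rank}(N) \leq n$ by Proposition \ref{prop:maxdimisotrop}. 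A normal factorization $N = VDV^H$ with $V^HV = I_n$ then allows me to assemble $U = [\,V\;\; R_{2n}VR_n\,]$; the remaining task is to ensure that $\textnormal{im}(V)$ is Lagrangian so that Proposition \ref{prop:unitaryperplectic} applies. If $\textnormal{rank}(N) = n$ this is automatic, and if $\textnormal{rank}(N) = k < n$, I would extend the first $k$ columns of $V$ (those carrying the nonzero spectrum of $N$) to an orthonormal basis of a Lagrangian subspace using Corollary \ref{cor:contained}; the last $n - k$ columns contribute nothing to the decomposition since they are annihilated by $D$.

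Part (2) is handled by the same argument with $+$ replaced by $-$ throughout the decompositions and in the lower-right block of $\widetilde{D}$ from \eqref{equ:perherm_diag}. The place where the most care is needed, and which I expect to be the main technical obstacle, is the diagonal form itself: since $\widetilde{D}$ for a per-Hermitian matrix carries the block $R_n D^H R_n$ rather than just $D^H$, one must confirm that the $R_n$-factors appearing in $U = [\,V\;\;R_{2n}VR_n\,]$ combine with $R_n D^H R_n$ to yield the clean term $R_{2n} V D^H V^H R_{2n} = N^\star$. This bookkeeping (exploiting $R_n^2 = I_n$ and $R_{2n}^2 = I_{2n}$ repeatedly) is the only genuinely new piece of calculation compared with the symplectic case and is the most likely source of sign or indexing errors, but it presents no conceptual obstacle.
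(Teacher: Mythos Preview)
Your proposal is correct and follows exactly the approach the paper intends: the paper explicitly states that Proposition \ref{prop:unitaryperplectic} together with \eqref{equ:perherm_diag} admit a proof analogous to that of Theorem \ref{thm:master1}, and this is precisely the argument you outline. Your identification of the $R_n$-bookkeeping as the only new computational wrinkle is accurate, and your verification that the $R_n$ factors cancel via $R_n^2 = I_n$ to yield $N^\star = R_{2n}VD^HV^HR_{2n}$ is correct.
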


Comparing Theorem \ref{thm:master1} and Theorem \ref{thm:master2} to Theorem \ref{thm:master3} notice that the decomposition $A = N \pm N^\star$ always carries a `--' sign whenever $A$ is skewadjoint and a `+' sign if $A$ is selfadjoint with respect to the indefinite inner products $[x,y]=x^HJ_{2n}y$ and $[x,y]=x^HR_{2n}y$, respectively.
It can be shown analogously to Example \ref{ex:exp1} that the exponential $\exp(A)$ of any normal per(skew)-Hermitian matrix $A  = N \pm N^\star$ (with normal $N \in \MM_{2n \times 2n}(\CC)$ satisfying $NN^\star = N^\star N = 0$) can be expressed as $\exp(A) = PP^{\pm \star}$ for the normal matrix $P = \exp(N)$. In particular, whenever $A$ is normal perskew-Hermitian, then $\exp(A)$ is normal and perplectic with an expression of the form $\exp(A) = PP^{- \star}$ for a normal matrix $P$. Similarly, the result from Example \ref{ex:roots} extends by the same reasoning to per-Hermitian matrices.

\section{Conclusions}
\label{sec:conclusions}
In this work we analyzed (skew)-Hamiltonian and per(skew)-Hermitian matrices under the viewpoint of structure-preserving diagonalizability. We showed that the symplectic and perplectic diagonalization of such matrices  is possible if and only if certain conditions apply to their real or purely imaginary eigenvalues and corresponding eigenspaces (cf. Theorems \ref{thm:diag_sympl0} and \ref{thm:diag_perpl0}). This diagonalizability condition turned out to be essentially the same for (skew)-Hamiltonian and per(skew)-Hermitian matrices although their structures are determined by a skew-Hermitian indefinite inner product and a Hermitian indefinite inner product, respectively. We conferred special attention to those structured matrices which are additionally normal. In this case, it was shown that an existing symplectic or perplectic diagonalization is a sufficient criterion to guarantee a diagonalization by a unitary-symplectic or unitary-perplectic similarity transformation to exist (Theorem \ref{thm:unitperpdiag} and \ref{thm:unitsympdiag}). For normal (skew)-Hamiltonian and per(skew)-Hermitian matrices it was proven that a symplectic or perplectic transformation to diagonal form implies the existence of a structured additive decomposition of such matrices. In turn, such an additive decomposition was shown to imply the matrix at hand to be unitary-symplectic or unitary-perplectic diagonalizable and gave an alternative characterization of such matrices (Theorems \ref{thm:master1}, \ref{thm:master2} and \ref{thm:master3}). The proof of this fact essentially required the knowledge that every neutral subspace is contained in a maximal neutral subspace (the latter has been called Lagrangian subspace, cf. Corollary \ref{cor:contained}). Throughout this work, some examples have been provided to illustrate the obtained results.

% Bibliography
%-----------------------------------------------------------------


\begin{thebibliography}{1}

\bibitem{Abu02}
I.~T.~Abu-Jeib.
\newblock {\em Centrosymmetric Matrices: Properties and an alternative Approach}.
\newblock {\em Canadian Applied Mathematics Quarterly}, 10:429--445, 2002.

\bibitem{Bourbaki07}
N.~Bourbaki.
\newblock {\em Alg{\`e}bre (Chapitre 9: Formes sesquilinéaires et formes quadratiques)}.
\newblock Springer-Verlag, Berlin, Heidelberg, 2007.

\bibitem{Bunse92}
A.~Bunse-Gerstner, R.~Byers and V.~Mehrmann.
\newblock {\em A Chart of Numerical Methods for Structured Eigenvalue Problems}.
\newblock {\em SIAM Journal on Matrix Analysis and Applications}, 13:419--453, 1992.

\bibitem{BuGeFass15}
A.~Bunse-Gerstner and H. Faßbender.
\newblock {\em Breaking Van Loan’s Curse: A Quest for Structure-Preserving Algorithms for Dense Structured Eigenvalue Problems}.
\newblock {Numerical Algebra, Matrix Theory, Differential-Algebraic, Equations and Control Theory. Springer International Publishing, Heidelberg, New York, London,} 2015.

\bibitem{CruzFass16}
R.~J. de la Cruz and H.~Faßbender.
\newblock {\em On the Diagonalizability of a Matrix by a Symplectic Equivalence, Similarity or Congruence Transformation}.
\newblock {\em Linear Algebra and its Applications}, 496:288--306, 2016.


\bibitem{Freiling02}
G.~Freiling, V.~Mehrmann and H.~Xu.
\newblock {\em Existence, Uniqueness, and Parametrization of Lagrangian Invariant Subspaces}.
\newblock {\em SIAM Journal on Matrix Analysis and Applications}, 23:1045--1069, 2002.

\bibitem{Gierz03}
G.~Gierz, K.~H.~Hoffmann, K.~Keimel, J.~D.~Lawson, M.~W.~Mislove and D.~S.~Scott.
\newblock {\em Continuous Lattices and Domains}.
\newblock Cambridge University Press, Cambridge, 2003.


\bibitem{GoLaRod05}
I.~Gohberg, P.~Lancaster and L.~Rodman.
\newblock {\em Indefinite Linear Algebra and Applications}.
\newblock Birkh{\"a}user Verlag, Basel, 2005.

\bibitem{Grone87}
R.~Grone, C.~R.~Johnson, E.~M.~Sa and H.~Wolkowicz.
\newblock {\em Normal Matrices}.
\newblock {\em Linear Algebra and its Applications}, 87:213--225, 1987.

\bibitem{High08}
N.~Higham.
\newblock {\em Functions of Matrices}.
\newblock Society for Industrial and Applied Mathematics, Philadelphia, 2008.

\bibitem{HoJo90}
R.~Horn and C.~R.~Johnson.
\newblock {\em Matrix Analysis}.
\newblock Cambridge University Press, Cambridge, 1990.

\bibitem{Kub11}
C.~Kubrusly.
\newblock {\em The Elements of Operator Theory}.
\newblock Birkh{\"a}user, Basel, 2011.

\bibitem{LaRod95}
P.~Lancaster and L.~Rodman.
\newblock {\em The Algebraic Riccati Equation}.
\newblock Oxford University Press, Oxford, 1995.

\bibitem{Mack05}
D.~S.~Mackey, N.~Mackey and F.~Tisseur.
\newblock {\em Structured Factorizations in Scalar Product Spaces}.
\newblock {\em SIAM Journal on Matrix Analysis and Applications}, 27:821--850, 2005.

\bibitem{Mack07}
D.~S.~Mackey, N.~Mackey and F.~Tisseur.
\newblock {\em On the Definition of Two Natural Classes of Scalar Product}.
\newblock {\em MIMS Eprint 2007.64, Manchester Institute for Mathematical Sciences, Manchester,} 2007.

\bibitem{MehrWat01}
V.~Mehrmann and D.~Watkins.
\newblock {\em Structure-Preserving Methods for computing Eigenpairs of large sparse Skew-Hamiltonian/Hamiltonian Pencils}.
\newblock {\em SIAM Journal on Scientific Computing}, 22:1905--1925, 2001.


\bibitem{PaVL81}
C.~Paige and C.~van Loan.
\newblock {\em A Schur Decomposition for Hamiltonian Matrices}.
\newblock {\em Linear Algebra and its Applications}, 41:11–-32, 1981.

\bibitem{PS2019}
P.~Saltenberger.
\newblock {\em On different Concepts for the Linearization of Matrix Polynomials and Canonical Decompositions of Structured Matrices with respect to Indefinite Sesquilinear Forms}.
\newblock PhD Thesis, Logos Verlag Berlin, Berlin, 2019.

\bibitem{Still08}
J.~Stillwell.
\newblock {\em Naive Lie Theory}.
\newblock Springer-Verlag, New York, 2008.

\bibitem{Zorn35}
M.~Zorn.
\newblock {\em A Remark on Method in transfinite Algebra}.
\newblock {\em Bulletin of the American Mathematical Society}, 41:667–-670, 1935.


\end{thebibliography}
\end{document}